\documentclass[10pt]{amsart}
\usepackage{listings,graphicx,amsmath,varioref,amscd,ulem, amssymb,color,bm,stmaryrd,amsthm,amsfonts,graphics,geometry,latexsym,pgf,pst-all} 

\theoremstyle{plain}
\usepackage{esint}
\usepackage{color}

\usepackage{amsthm}
\theoremstyle{plain}
\newtheorem{theorem}{Theorem}[section]
\newtheorem{proposition}[theorem]{Proposition}

\usepackage{geometry}
\usepackage[colorlinks=false]{hyperref}

\theoremstyle{definition}
\newtheorem{defin}[theorem]{Definition}

\newtheorem{remark}[theorem]{Remark}

\theoremstyle{remark}

\def\huzu{H^{1}_{0}(0,L)}
\def\w{W^{1,1}_0(0,L)}

\def\wm{W^{1,m}_0(0,L)}

\def\bk{\color{black}}

\def\re{\mathbb{R}}
\numberwithin{equation}{section}
 \makeatletter
\@namedef{subjclassname@2020}{%
  \textup{2020} Mathematics Subject Classification}
\makeatother

\def\dys{\displaystyle}

\newcommand{\car}[1]{\raise1pt\hbox{$\chi$}_{#1}}

\def\og{\leavevmode\raise.3ex\hbox{$\scriptscriptstyle\langle\!\langle$~}}
\def\fg{\leavevmode\raise.3ex\hbox{~$\!\scriptscriptstyle\,\rangle\!\rangle$}}

\author[D. Giachetti]{Daniela Giachetti}
\address[D. Giachetti]{Dipartimento di Scienze di Base e Applicate per l'Ingegneria, Sapienza Universit\`a di Roma, Via Scarpa 16, 00161 Roma, Italy 
	\\ {daniela.giachetti@gmail.com}}
	\author[P.J. Mart\'inez-Aparicio]{Pedro J. Mart\'inez-Aparicio}
\address[P.J. Mart\'inez-Aparicio]{Departamento de Matem\'aticas,
Universidad de Almer\'ia, Ctra. Sacramento s/n, La Ca\~{n}ada de San Urbano, 04120 Almer\'ia, Spain, pedroj.ma@ual.es}
	\author[F. Murat]{Fran\c{c}ois Murat}
\address[F. Murat]{Laboratoire Jacques-Louis Lions, Bo\^ite courrier 187, Sorbonne Universit\'e, 4 place Jussieu, 75252 Paris cedex 05, France, francois.murat@sorbonne-universite.fr}
\author[F. Petitta]{Francesco Petitta} 
\address[F. Petitta]{Dipartimento di Scienze di Base e Applicate per l'Ingegneria, Sapienza Universit\`a di Roma, Via Scarpa 16, 00161 Roma, Italy 
	\\ francesco.petitta@uniroma1.it}

\keywords{Elliptic problems, Singular terms, Divergence terms, Definition of a solution, Existence, Non-existence} \subjclass{35J75, 35B30, 35B35, 34A06, 34A12}

\begin{document}

\title[]{New definitions of a solution\break of a  one-dimensional elliptic equation\break with a singular first order divergence term}
\maketitle

\vspace{-1em}
\begin{center}
{\it Dedicato a Gioconda Moscariello per il suo settantesimo compleanno}
\end{center}

\begin{abstract}
In this paper, which is a continuation of our recent paper \cite{GMMP1}, we give two new definitions of a weak solution of the one-dimensional, elliptic, nonlinear singular problem which is formally written as
\begin{equation*}
\begin{cases}
\dys -\frac{d}{dx}\left(a(x) \frac{d u}{dx}\right)  = - \frac{d \phi (u) }{dx}- \frac{d g(x)  }{dx}& \text{in}\;(0,L),\\
u(0)=u(L)=0\,, & 
\end{cases}
\end{equation*}
where $\phi:\re\mapsto\re\cup\{+\infty\}$ is a singular function whose model is given by $\dys\phi(s)=\phi_\gamma (s)=\frac{1}{|s|^{\gamma}}$ with $\gamma>0$. In these two  definitions the solutions  belong respectively  to the space    $u\in W_0^{1,1}(0,L)$ with $\phi(u)\in L^1(0,L)$, and  to the space      $u\in W_0^{1,m}(0,L)$ with $\phi(u)\in L^m(0,L)$ for $m>1$.  In these frameworks we state and prove results  of  non-existence, existence,  and non-isolation of the   solutions.

\end{abstract}

\tableofcontents

\section{Presentation of the problem and of three possible definitions of weak solutions}\label{sec1}

In this  paper, which is a continuation of our recent paper \cite{GMMP1}, we will study  the  one-dimensional ($N=1$), elliptic, nonlinear singular  problem which is formally written as 
	
\begin{equation}
\label{pb1}
\begin{cases}
\dys -\frac{d}{dx}\left(a(x) \frac{d u}{dx}\right)  = - \frac{d \phi (u) }{dx}- \frac{d g(x)  }{dx}& \text{in}\;(0,L),\\
u(0)=u(L)=0\,, & 
\end{cases}
\end{equation}
where $u$ is the unknown, where  
\begin{equation}
\label{cond1}
N=1,\,\,\,\, L>0,  \,\,\mbox{ and } \,\,m\geq 1,
\end{equation}
 and where  the data ($a$, $g$, $\phi$)  satisfy
\begin{equation}\label{a1}
a\in L^{\infty}(0,L),\,\, \exists \,\, \alpha,\beta,\,\, 0<\alpha<\beta, \ \alpha\leq a(x)\leq  \beta\, \,\, \text{ a.e.}\,\, x\in(0,L),
\end{equation} 
\begin{equation}\label{g1}
g\in L^{m}(0,L), 
\end{equation}
\begin{equation}
\label{condphi1}
\begin{cases}
\phi:\re\mapsto\re\cup\{+\infty\},
\\\phi\,\, \mbox{ is continuous with values in } \re\cup\{+\infty\},\\
 \phi(s)<+\infty, \ \ \forall s\in\re,\,\, \ s\neq 0\,.
\end{cases}
\end{equation}

We are mainly interested  in the case where $\phi$ is singular at $s=0$,   i.e.  in the case where 
\begin{equation} \label{p3}\phi(0)=+\infty\,. \ \ \ \end{equation}

\noindent We will however also consider functions $\phi$ which do not satisfy \eqref{p3}, for instance when approximating a singular function $\phi$ which satisfies \eqref{condphi1} and \eqref{p3} by a sequence of functions $\phi_n$ which belong to $C^0(\re)$, and therefore satisfy \eqref{condphi1} but not \eqref{p3}.
\medskip 
  
The {\bf model case} for the function $\phi$ is the function $\phi_\gamma$ given by
\begin{equation}
\label{pM}
%\begin{cases}
\dys\phi_\gamma (s)=\frac{1}{|s|^{\gamma}}\ \ \text{with}\ \gamma>0,   
%\end{cases}
\end{equation}
which satisfies \eqref{condphi1} and \eqref{p3}. 

\bigskip 

 When looking at problem \eqref{pb1} in order to give it a   correct mathematical sense, a ``natural" framework is to look for a solution $u\in \w$ with $\phi(u)\in L^1(0,L)$, which allows one to give a correct mathematical meaning to the second line of \eqref{pb1} as well as a correct mathematical meaning, in the sense of distributions in $(0,L)$,  to every term of the first line of \eqref{pb1}, and, accordingly, to take $m= 1$ in order to have the datum $g$ in $L^1(0,L)$.  We therefore introduce in the present paper the following notion of solution (Definition $1$); this definition is new.
\begin{defin}[Definition $1$]
\label{defin}
 Assume that \eqref{cond1} holds true with 
 \begin{equation}
 \label{2.7bis}
 m=1,
 \end{equation} 
 and that  the data $(a,g,\phi)$ satisfy hypotheses \eqref{a1}-\eqref{condphi1}. We will say that $u$ is a {\it weak solution of problem \eqref{pb1} in the sense of Definition} $1$ if $u$ satisfies 
\begin{equation}\label{formdef1}
\begin{cases}\vspace{0.1cm}
\dys u\in \w,\  \phi(u)\in L^1(0,L),  \\ 
\dys \int_0^L a(x) \frac{d u}{dx} \frac{d z}{dx} dx =\dys  \int_0^L   \phi (u) \frac{d z}{dx} dx+\int_0^L   g(x)  \frac{d z}{dx}dx,\,\,\,\,
\forall z\in {\rm Lip}(0,L)\,\, \mbox{ with } \,\, z(0)=z(L)=0.
\end{cases}
\end{equation} 
 \hfill\qed
\end{defin}

The name ``Definition $1$"  (Definition \ref{defin}   above) is intended to remind the reader that this definition of the solution is based on the space $L^1(0,L)$ and on the space $W^{1,1}_{0}(0,L)$ which is built from it.

\bigskip 

On the other hand, we actually already studied in \cite{GMMP1} problem \eqref{pb1} in a different framework, in which we searched  for a solution $u\in \huzu$  with $\phi(u)\in L^{2}(0,L)$, when the  datum $g$ belongs to  $L^{2}(0,L)$. In that paper we defined a weak solution of problem \eqref{pb1} as follows: 

\begin{defin}[Definition $2$]\label{defin2}
Assume that \eqref{cond1} holds true with 
 \begin{equation}
 \label{2.7bis2}
 m=2,
 \end{equation} 
and that  the data $(a,g,\phi)$ satisfy hypotheses \eqref{a1}-\eqref{condphi1}. We will say that $u$ is a {\it weak solution of problem \eqref{pb1} in the sense of Definition $2$} if $u$ satisfies 
\begin{equation}\label{form-defi2}
\begin{cases}
\displaystyle u\in \huzu\,, \ \phi(u)\in L^2(0,L),  \\ \\ 
\dys \int_0^L a(x) \frac{d u}{dx} \frac{d z}{dx} dx =\dys  \int_0^L   \phi (u) \frac{d z}{dx} dx+\int_0^L   g(x)  \frac{d z}{dx}dx, \,\,\,\,\forall z\in  H^{1}_0 (0,L).
\end{cases}
\end{equation}
 \qed
\end{defin}

The name ``Definition $2$" (Definition \eqref{defin2}   above) is intended to remind  the reader that this definition is based on the space $L^2(0,L)$ and on the space $H_0^1(0,L)$ which is built from it. This definition was called ``Definition 2.6" in \cite{GMMP1}. 

\bigskip
\indent Let us also introduce in the present paper the following natural variant of Definitions $1$ and $2$; this definition is also new. 
\begin{defin}[Definition $m$]
\label{definm}
 Assume that \eqref{cond1} holds true with
 \begin{equation}
 \label{2.11bis}
 m>1,
 \end{equation}
 and that  the data $(a,g,\phi)$ satisfy hypotheses \eqref{a1}-\eqref{condphi1}. We will say that $u$ is a {\it weak solution of problem \eqref{pb1} in the sense of Definition} $m$  if $u$ satisfies 
\begin{equation}\label{formdefm}
\begin{cases}\vspace{0.1cm}
\dys u\in W^{1,m}_0 (0,L),\  \phi(u)\in L^m(0,L),  \\ 
\dys \int_0^L a(x) \frac{d u}{dx} \frac{d z}{dx} dx =\dys  \int_0^L   \phi (u) \frac{d z}{dx} dx+\int_0^L   g(x)  \frac{d z}{dx}dx,\,\,\,\,\forall z\in  W^{1,m'}_0 (0,L),
\end{cases}
\end{equation} 
with $m'$ the H\"older conjugate of $m$, namely $\dys m'=\frac{m}{m-1}$.

  \hfill\qed
\end{defin}

\smallskip 

Please note that Definition $2$ (Definition \eqref{defin2}   above) is the particular case of Definition $m$ (Definition \ref{definm} above) where $m=2$.
\smallskip

\begin{remark}\label{r1.4}
\noindent Observe that, for $m=1$,  Definition $1$ (Definition \ref{defin} above) and, for $m>1$, Definition $m$ (Definition \ref{definm} above) are equivalent to searching for $u$ which satisfies 
\begin{equation}\label{defin1dis}
\begin{cases}
\displaystyle u\in \wm,\,\, \phi(u)\in L^m (0,L),  \\ 
\displaystyle-\frac{d}{dx}\left(a(x) \frac{d u}{dx}\right)  = \displaystyle- \frac{d \phi (u)}{dx} \displaystyle- \frac{d g(x) }{dx} \ \  \text{in} \ \ \mathcal{D}'(0,L).
\end{cases}
\end{equation}
Observe also that in view of the one-dimensional setting that  we are considering in the present paper (see \eqref{cond1}), formula \eqref{defin1dis} is equivalent to searching for $u$ and $c$ which satisfy 
\begin{equation}\label{definODE}
\begin{cases}
\displaystyle u\in \wm\,, \ \phi(u)\in L^m (0,L),   \ \, c\in\re,\\ 
\displaystyle a(x) \frac{d u}{dx}   = \displaystyle   \phi (u) \displaystyle+  g  +c\ \ \ \text{in $\mathcal{D}'(0,L)$}.   
\end{cases}
\end{equation}
\noindent Note also that dividing the last line of \eqref{definODE} by $a(x)$ (recall that $a(x)\geq \alpha>0$, see \eqref{a1}) and then integrating from $0$ to $L$ proves that  in \eqref{definODE} the constant  $c$ is not arbitrary, but is given in terms  of $u$, $\phi(u)$, and $g$ by the formula 
\begin{equation}\label{definC}
c=\dys-\frac{\dys\int_0^L \frac{\phi(u)}{a(x)}\,dx+\dys\int_0^L \frac{g(x)}{a(x)}\,dx}{\dys\int_0^L \frac{1}{a(x)}\,dx}.
\end{equation}

Observe finally that if $u$ is a weak solution of problem \eqref{pb1} for the data $(a,g,\phi)$ in the sense of  Definition $1$ (Definition \ref{defin} above) or in the sense of Definition $m$ (Definition \ref{definm} above), or equivalently if $u$ is a solution of \eqref{defin1dis}, then $u$ is also a solution of these problems for the data $(a,g+C,\phi)$ for every $C\in\mathbb{R}$. Note also that if in \eqref{definODE} one replaces the data $g$ by $\overline{g}=g+C$ for some $C\in\mathbb{R}$, while keeping $\overline{u}=u$, the constant $c$ in formula \eqref{definC} is replaced by $\overline{c}=c-C$: in other terms when the solution $u$ is fixed, the second line of formula \eqref{definODE} does not define the datum $g$ and the constant $c$, but only the sum $g+c$, and one has $\overline{g}+\overline{c}=g+C+c-C=g+c$.

\qed
\end{remark}

\begin{remark}
\label{1.4bis}

\indent Formula \eqref{definODE} justifies the choice of the space $L^{1}(0,L)$ for the datum $g$ in Definition $1$ (Definition \ref{defin} above) and the choice of the space $L^m (0,L)$ for the datum $g$ in Definition $m$ (Definition \ref{definm} above).

\qed
\end{remark}

\bigskip 
\begin{remark}\label{mnq}
Let us also observe that if $1\leq m\leq q$, one has $L^{q}(0,L)\subset L^{m}(0,L)$, and that therefore, for any $g\in L^{q}(0,L)$,   any weak solution of problem \eqref{pb1} in the sense of Definition $q$ (Definition \ref{definm} above) is also a weak solution of problem \eqref{pb1} in the sense of Definition $m$ (Definition \ref{definm} above). In particular, (in the case where $m=1$ and $q=2$) if $g \in L^{2}(0,L)$, any weak solution of problem \eqref{pb1} in the sense of Definition $2$ is also a weak solution  of problem \eqref{pb1} in the sense of Definition $1$. 

\qed
\end{remark}

\bigskip
\section{Summary of the results obtained in \cite{GMMP1} in the framework of Definition $2$}\label{sec2}

Our goal in the present paper is to study Definition $1$ and Definition  $m$ (Definition ~\ref{defin} and Definition \ref{definm} above),   but let us first present a brief  summary of the results obtained in \cite{GMMP1} using Definition $2$ (Definition~\ref{defin2} above). 

\smallskip
Definition $2$ was motivated by the following important fact: take  $z=u$ as test function in the second line of \eqref{form-defi2}, and observe that one can prove (see \cite[Lemma 2.12]{GMMP1}\footnote{Result \eqref{zh} is formally explained by the following (formal) computation: define  the function  $\psi$ by 
$$
\psi(s)=\int_0^s \phi(t)\, dt\ \forall s\in\re; 
$$
then formally one has 
\begin{eqnarray*}
\dys \int_0^L \phi(z)\frac{dz}{dx}\, dx= \int_0^L \psi'(z)\frac{dz}{dx}\, dx\ =\int_0^L  \frac{d\psi(z)}{dx}\, dx = \psi (z(L)) - \psi(z(0))=0-0=0. 
\end{eqnarray*}

The above formal computation becomes correct when $\phi$ is assumed to belong to  
$L^1_{\rm loc}(\re) $, namely when $\dys \int_{-\delta}^{\delta}|\phi(s)|ds<+\infty$ for $\delta>0$, but Lemma 2.12 of \cite{GMMP1} does not require this assumption; it nevertheless requires the existence of a function $z$ which satisfies $z\in \huzu  \mbox{ and  }\phi(z)\in L^2(0,L)$.}
) 
that 
\begin{equation}\label{zh} 
\dys \text{for any}\ z\in \huzu \,\,\mbox{ with } \,\, \phi(z)\in L^2(0,L) \,\,\, \text{one has }\ \int_0^L \phi(z)\frac{dz}{dx}\, dx=0\,.
\end{equation}

If $u$ is a weak solution  of problem \eqref{pb1} in the sense of Definition $2$ (Definition~\ref{defin2} above), namely a solution of \eqref{form-defi2}, then  \eqref{zh} implies that one has 
\begin{equation}
\label{vf2bis}
\dys\int_0^L a(x)\frac{du}{dx}\frac{du}{dx}\, dx= \int_0^L g(x)\frac{du}{dx}\,dx,
\end{equation}
 which in turn implies the a priori estimate 
\begin{equation}
\label{propdubis}
\left\|\frac{du}{dx}\right\|_{L^2(0,L)}\leq \frac{1}{\alpha}\|g\|_{L^2(0,L)},
\end{equation}
and also, using Morrey's embedding $\huzu\subset L^{\infty}(0,L)$,
\begin{equation}
\label{propduibis}
\left\|u\right\|_{L^{\infty}(0,L)}\leq \sqrt{L} \left\|\frac{du}{dx}\right\|_{L^2(0,L)} \leq \frac{\sqrt{L}}{\alpha} \|g\|_{L^2(0,L)}.
\end{equation}

Therefore,   every possible weak solution of problem \eqref{pb1} in the sense of Definition $2$ (Definition~\ref{defin2} above) satisfies the a priori estimates \eqref{propdubis} and \eqref{propduibis}, i.e. $\huzu$ and $L^\infty(0,L)$ bounds which depend only on $L$, on the coercivity constant $\alpha$ of $a$, and on $\|g\|_{L^2(0,L)}$, but which do not depend  on the function $\phi$, which is only assumed to satisfy \eqref{condphi1}.  These a priori estimates are important results which are specific to the case $m=2$.

\smallskip

Starting from approximations of the function $\phi$ by functions $\phi_n$ which belong to $C^0(\mathbb{R})$, estimate \eqref{propdubis} allows one to obtain solutions $u_n$ of approximating problems which are bounded in $\huzu$ and to try to pass to the limit in $n$, with the hope of  proving that the limit $u$  of a subsequence of these $u_n$'s is a solution of \eqref{form-defi2}, namely  a weak solution  of problem \eqref{pb1}  in the sense of Definition $2$   (Definition~\ref{defin2} above). 

Let us stress that this approximation procedure also works in dimension $N>1$ thanks to an a priori estimate similar to \eqref{propdubis}, and that it allows one to prove that $\phi_n(u_n(x))$  converges to $\phi(u(x))$ for almost every $x$; but it does not allow one to pass to the limit in $\phi_n(u_n)$ in the sense of distributions. 

\medskip 

In contrast, in the one-dimensional case,  we were able to obtain in \cite[Lemma 3.1]{GMMP1}, a new a priori estimate of  $\phi_n(u_n)$ in $L^2(0,L)$ when the limit $u$ of a subsequence of the $u_n$'s  satisfies $u\not\equiv0$; this original a priori estimate is specific to the case where $N=1$.

\smallskip 
 This latest result seems to be an existence result, but actually it  only proves the following alternative: 
either $u=0$, or $u$ is a weak  solution of problem \eqref{pb1}  in the sense of Definition $2$ (Definition \ref{defin2} above).

\smallskip 
One might  think that the case where $u=0$ is exceptional, but this is not the case, as proved by the  following two non-existence results:
in the case where $\phi(0) = +\infty$, 
it  does not exist any weak solution of problem \eqref{pb1} in the sense of Definition $2$ (Definition \ref{defin2} above) if either the datum $g \in L^2(0,L)$ satisfies $g(x) \ge -M$ a.e. $x \in (0,L)$ for a given $M$ (see  \cite[Theorem 4.1]{GMMP1}), and/or if the singular function $\phi$ satisfies (see \cite[Theorem 4.4]{GMMP1}) 
\[
\int_0^\delta |\phi(t)| \, dt = \int_{-\delta}^0 |\phi(t)| \, dt = +\infty\ \ \text{for $\delta > 0$,}
\]
 which corresponds to the case $\gamma \geq 1$ in the model case where $\dys \phi(s) = \phi_{\gamma}(s)= \frac{1}{|s|^\gamma}$  with $\gamma > 0$.

We also proved in \cite[Proposition 7.3]{GMMP1} that in the model case where $\dys\phi(s) = \phi_{\gamma}(s)= \frac{1}{|s|^\gamma}$ with $0 < \gamma < 1$,  there  exists a large set of data $g \in L^2(0,L)$ for which problem \eqref{pb1} has a weak solution   in the sense of Definition $2$ (Definition \ref{defin2} above). 
Indeed, for any given finite number $n$ of points $x_i$ with
\[
0 < x_1 < \,\cdots\, < x_i < x_{i+1} < \,\cdots\, < x_n < L,
\]
we constructed a large set of functions $w \in H^1_0(0,L)$ which satisfy  $w(x_i) = 0$ for every $i$ and  $\phi(w)\in L^2(0,L)$;  defining then $g \in L^2(0,L)$ by $\dys g = a(x) \frac{dw}{dx} - \phi(w)$, we have built a large set of data  $g\in L^2(0,L)$ for which problem \eqref{pb1} admits the weak solution $w$  in the sense of Definition $2$ (Definition \ref{defin2} above).

In a similar way,  in the model case $\dys \phi(s) = \phi_{\gamma}(s)= \frac{1}{|s|^\gamma}$ with $0<\gamma <1$, we constructed in \break\cite[Theorem 7.5]{GMMP1},   for any given datum $g \in L^2(0,L)$ and for any $\delta$ with $0 < \delta < L$,  a datum $\hat{g}_\delta \in L^2(0,L)$ such that $\hat{g}_\delta = g$ on $(0, L-\delta)$ for which problem \eqref{pb1} has a weak solution   in the sense of Definition $2$ (Definition \ref{defin2} above).
\medskip

We also proved in  \cite[Proposition 8.1]{GMMP1} that every weak solution $u$ of problem \eqref{pb1} in the sense of Definition  $2$ (Definition~\ref{defin2} above) is not isolated, or in other terms that  this solution $u$ is the weak limit in $\huzu$ of solutions $u_n$, with $u_n\not=u$,  of approximating problems with data $(a,g_n,\phi_n)$, for any a priori given sequence $\phi_n$ of ``reasonable approximations" (see Definition~\ref{3.7} below) of the singular function  $\phi$, provided that the data $g_n$ are conveniently chosen. On the other hand, we also proved in  \cite[Proposition 8.3]{GMMP1} that exactly in the same context, namely for the same a priori given sequence $\phi_n$ of ``reasonable approximations" of  $\phi$,   one can choose the data $g_n$ in such a way that the solutions $u_n$ of the approximating problems tend to zero weakly in $\huzu$.
\medskip

Let us finally mention that in  \cite[Section 5]{GMMP1} we studied the following singular ODE: for any given function $\phi$ which satisfies \eqref{condphi1} and possibly \eqref{p3}, and for any given datum  $h\in L^2(0,L)$, search for $v$ which satisfies 
\begin{equation}\label{ODE}
\begin{cases}
\dys v\in H^1(0,L),\,\, \phi(v)\in L^2(0,L),\\
\dys a (x) \frac{dv}{dx}   =  {\phi} (v) + h(x) \,\, \text{ in}\,\, \mathcal{D}'(0,L))\,,\\
\dys v(0)=0. & 
\end{cases}
\end{equation}

\noindent The ODE \eqref{ODE} is strongly related to problem \eqref{pb1}, since every solution $v$ of the ODE \eqref{ODE} is a weak solution of problem \eqref{pb1} with $g=h$ in the sense of Definition $2$ (Definition~\ref{defin2} above) if and only if $v(L)=0$.

We proved the existence of a solution of the ODE \eqref{ODE}, as well as an a priori estimate and a stability result. Under further hypotheses on $\phi$ and $h$, we also proved the positivity of any solution, and comparison and uniqueness results; see \cite[Subsection 5.3]{GMMP1}  for a synthesis of these results, which are new when the function $\phi$ is singular.
\medskip

We then used these results obtained on the ODE \eqref{ODE} to prove a multiplicity result (see \cite[Theorem 6.1]{GMMP1}) of the weak solutions of problem \eqref{pb1} in the sense of Definition $2$ (Definition~\ref{defin2} above).

The results obtained on the ODE \eqref{ODE}  also allowed us to generalize the results stated above in the model case $\phi(s)=\phi_\gamma (s)=\frac{1}{|s|^{\gamma}}$ with $0<\gamma<1$ to the general case of a function $\phi$ which satisfies \eqref{condphi1} and \eqref{p3} as well as 
\begin{equation}\label{inte}
\int_{-\delta}^{\delta}|\phi(t)|dt<+\infty\ \ \text{for $\delta>0$}. 
\end{equation}

\vskip1cm 
\section{New results  obtained in the frameworks of Definition $1$  and of Definition $m$}
\label{sec3}

This section is dedicated to state and prove results in the framework of Definition $1$ (Definition~\ref{defin} above) and of Definition $m$ (Definition~\ref{definm} above).

\smallskip
In Subsection \ref{sub3.1} we state and prove in Theorem \ref{t3.2} below that if one assumes that 
\begin{equation} \label{m-1} 
\dys m>1,\ \ \ \int_0^\delta|\phi (t)|^{m-1}\,dt= \dys\int_{-\delta}^0|\phi(t)|^{m-1}\,dt=+\infty \ \ \text{for   $\delta>0$},
 \end{equation}
then it does not exist any weak solution of problem \eqref{pb1} in the sense of Definition  $m$ (Definition \ref{definm} above). 

\smallskip

In contrast, in Subsection \ref{sub3.2}, we state and prove in Theorem \ref{t3.4} below that in the  model case \eqref{pM} where  \[
\phi(s) = \phi_\gamma(s) = \frac{1}{|s|^\gamma} \quad \text{with } \gamma > 0,
\]
when  $m$ and $\gamma$ satisfy either
\begin{equation} \label{3.1}
 m=1, \ \ \  \gamma>0,
\end{equation}
or
\begin{equation} \label{3.2}
m>1, \ \ \ 0 < \gamma < \frac{1}{m-1},
\end{equation}
there exists (a large set of) data $g\in L^{m}(0,L)$ for which problem \eqref{pb1} has at least  a weak solution in the sense of Definition $1$ (Definition \ref{defin} above) if $m=1$, or in the sense of Definition $m$ (Definition \ref{definm} above) if $m>1$.

Since in the model case \eqref{pM} condition \eqref{m-1} is equivalent to 
\begin{equation} \label{3.1bis}
m>1, \ \ \   \gamma \geq  \frac{1}{m-1},
\end{equation}
and since Theorem \ref{t3.4} below  holds true for every $\gamma>0$ when $m=1$, combining  the results of Theorem \ref{t3.2} and Theorem \ref{t3.4} proves that, in the model case \eqref{pM}, conditions    \eqref{3.2} and \eqref{3.1} are necessary and sufficient  in order to ensure the existence of (a large set of) data $g\in L^{m}(0,L)$ for which problem \eqref{pb1} has at least a weak solution in the sense of Definition $m$ (Definition \ref{definm} above) when $m>1$,  and in the sense of Definition $1$ (Definition \ref{defin} above) when  $m=1$. 

 In the case of a general singular function $\phi$ which only satisfies \eqref{condphi1} and \eqref{p3},  we are unfortunately unable to prove that similarly, condition  
\begin{equation}\label{nom-1}
\dys m>1\ \ \ \int_0^\delta|\phi (t)|^{m-1} \,dt<+\infty \ \ \text{and} \ \ \dys\int_{-\delta}^0|\phi(t)|^{m-1} \,dt<+\infty \ \ \text{for   $\delta>0$}, 
\end{equation}
(which is the counterpart of \eqref{m-1} and the analogue of \eqref{3.2} in the general case) is necessary and sufficient  in order to ensure the existence of (a large set of) data $g\in L^{m}(0,L)$ for which problem \eqref{pb1} has at least a weak solution in the sense of Definition $m$ (Definition \ref{definm} above). 

The results of Theorems~\ref{t3.2} and \ref{t3.4} below therefore show that the range of those $\gamma$'s   for which Definition $m$ (Definition \ref{definm} above) of a weak solution of problem \eqref{pb1} is meaningful varies with the value of $m$, which is surprising. Observe also that when $m=1$ there is no restriction on $\gamma$ in order for Definition $1$ (Definition \ref{defin} above) to be meaningful.

\smallskip 

Finally in Subsection \ref{sub3.3} we will state and prove (see Proposition \ref{stab} below) that every weak solution of problem \eqref{pb1} in the sense of Definition $1$ (Definition \ref{defin} above) or in the sense of Definition $m$ (Definition \ref{definm} above) is not isolated. 

\subsection{Two results of non-existence of weak solutions of problem \eqref{pb1} in the sense of Definition~$m$ or of Definition~$1$}\label{sub3.1}
\mbox{ }

In this subsection, we prove two results of non-existence which are of very different kinds. 
\smallskip

The first one (Theorem \ref{t3.2}  below) is concerned with the case where, for $m>1$, the function $\phi$ is too singular for   problem \eqref{pb1} to have a weak solution in the sense of Definition $m$ (Definition \ref{definm} above).

\begin{theorem}\label{t3.2}
Assume that \eqref{cond1} holds true, that $m>1$, and that the data $(a, g, \phi)$ satisfy \eqref{a1}, \eqref{g1}, \eqref{condphi1}, and \eqref{p3}. If the singular function $\phi$ satisfies  \eqref{m-1}, it does not exist any weak solution of problem \eqref{pb1} in the sense of Definition $m$ (Definition \ref{definm} above).
\end{theorem}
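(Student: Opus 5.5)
Suppose, by contradiction, that $u$ is a weak solution of problem \eqref{pb1} in the sense of Definition $m$. By Remark \ref{r1.4} there is a constant $c\in\re$ such that
\[
a(x)\frac{du}{dx}=\phi(u)+g+c \ \ \text{in } \mathcal{D}'(0,L),
\]
with $u\in \wm$, $\phi(u)\in L^m(0,L)$, $g\in L^m(0,L)$. The key idea is a change of variables (coarea-type argument) on a set where $u$ is small. Since $\phi(0)=+\infty$ and $\phi(u)\in L^m$, hence $\phi(u)<+\infty$ a.e., the set $\{u=0\}$ has measure zero. Moreover $u(0)=0$ and $u\not\equiv 0$, so $u$ takes small nonzero values. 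In the basic case there is some $\delta>0$ such that $u$ attains every value in $(0,\delta)$, or every value in $(-\delta,0)$; by continuity one of these always happens near $x=0$.

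Assume we are in the first case. For a.e. $t\in(0,\delta)$ the level set $\{u=t\}$ is finite, and at points of it $u'\neq0$. Then the one-dimensional area formula gives
\[
\int_0^L |\phi(u)|^{m-1}|u'|\,\car{\{0<u<\delta\}}\,dx=\int_0^\delta |\phi(t)|^{m-1}\,\#\{u=t\}\,dt\ \geq\ \int_0^\delta|\phi(t)|^{m-1}dt=+\infty
\]
by \eqref{m-1}. On the other hand, the equation gives $|u'|\leq \frac{1}{\alpha}\big(|\phi(u)|+|g|+|c|\big)$. Hence
\[
|\phi(u)|^{m-1}|u'|\le \frac{1}{\alpha}\big(|\phi(u)|^m+|\phi(u)|^{m-1}(|g|+|c|)\big),
\]
and the right-hand side is in $L^1(0,L)$ by Hölder's inequality, since $|\phi(u)|^{m-1}\in L^{m'}$ and $|g|+|c|\in L^m$. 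This is a contradiction. The case where $u$ covers $(-\delta,0)$ is identical, using the second integral in \eqref{m-1}.

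The step needing care is justifying the area formula for $u\in W^{1,m}$, which is absolutely continuous. I would apply the Banach indicatrix formula
\[
\int_0^L F(u)|u'|\,dx=\int_{\re}F(t)\,\#\{x:u(x)=t\}\,dt,
\]
valid for nonnegative Borel $F$ and absolutely continuous $u$, with $F=|\phi|^{m-1}\car{(0,\delta)}$. Then it suffices to note that $\#\{u=t\}\ge1$ for every $t\in(0,\delta)$, which follows from the intermediate value theorem once $u$ attains some value $\ge\delta$.

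This holds because $u(0)=0$ and $u\not\equiv0$: choose $x_0$ with $u(x_0)\neq0$, say $u(x_0)>0$, and take $\delta=u(x_0)$. If instead $u(x_0)<0$, use the negative side. So no smallness or fine structure is needed; the only genuine input is the indicatrix formula and the integrability bound from the equation.
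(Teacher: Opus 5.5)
Your argument is correct, and its core idea is the paper's. Hölder's inequality makes $|\phi(u)|^{m-1}|u'|$ integrable on $(0,L)$, while the change of variables $t=u(x)$ turns this integral into something at least $\int_0^\delta|\phi(t)|^{m-1}\,dt=+\infty$ (or its analogue on $(-\delta,0)$). The difference lies in how you justify that change of variables.

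The paper takes $x_0$ with $u(x_0)>0$ and lets $y_0$ be the last zero of $u$ before $x_0$. It applies the elementary chain rule to a primitive $\hat\psi_\delta$ of $\phi^{m-1}$ on $[y_0+\eta,x_0]$, where $u$ stays away from $0$, so $\hat\psi_\delta$ is $C^1$ on the range of $u$. It then lets $\eta\to 0$. This needs only the chain rule for a $W^{1,m}$ function composed with a $C^1$ function. It works with a signed integrand, which is why it first fixes $\delta$ with $\phi>0$ on $[0,\delta]$.

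You instead apply the Banach indicatrix (area) formula on all of $[0,L]$, with $F=|\phi|^{m-1}\chi_{(0,\delta)}$, $\delta=u(x_0)$, and the bound $\#\{u=t\}\geq 1$ from the intermediate value theorem. This removes the need for $y_0$, for the limit in $\eta$, and for any sign information on $\phi$. The price is quoting a heavier measure-theoretic theorem for absolutely continuous functions.

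Two small remarks. First, your use of the equation to bound $|u'|$ is superfluous: $u'\in L^m$ and $|\phi(u)|^{m-1}\in L^{m'}$ already give $|\phi(u)|^{m-1}|u'|\in L^1$ by Hölder. Once that step is dropped, your proof shows, exactly as the paper's does, the stronger fact that no $\hat u\in W^{1,m}_0(0,L)$ with $\phi(\hat u)\in L^m(0,L)$ exists, independently of $a$ and $g$. Second, your preliminary comments on finiteness of level sets and on $u'\neq 0$ there play no role in the final argument and can be removed.
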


\medskip

The   result of Theorem \ref{t3.2} was proved in \cite[Theorem 4.4 and Remark 4.5]{GMMP1}, in the case where  $m=2$.

\begin{proof}[Proof of Theorem \ref{t3.2}] In order to prove Theorem \ref{t3.2} we will   prove that when \eqref{m-1}  
holds true, it does not exist any function $\hat{u}$ such that
\begin{equation}\label{3000}
\hat{u} \in W_0^{1,m}(0,L) \quad \text{with }\quad \phi (\hat{u})  \in L^m(0,L);\end{equation}
 this non-existence result immediately implies the non-existence result of Theorem \ref{t3.2}. 
 
 \smallskip
  
 \indent Let us begin with  the model case where  the singular function $\phi$ is given by $\dys\phi(s) = \phi_\gamma(s) = \frac{1}{|s|^\gamma}$. In this case the proof is particularly simple. 
 
 We have to prove that  it does not exist any function $\hat{u}$ such that
  \[
\hat{u} \in W_0^{1,m}(0,L) \quad \text{with }\quad \phi_\gamma(\hat{u})=\frac{1}{|\hat{u}|^{\gamma}} \in L^m(0,L). \] 

\noindent For that, recall that  Morrey's inequality (which is an immediate consequence of H\"older's inequality in dimension $N=1$) states that for $m > 1$ one has, with $\dys m'=\frac{m}{m-1}$,  
\[
\text{if } \hat{u} \in W^{1,m}(0,L), \text{ then } \hat{u} \in C^{0,\frac{1}{m'}}([0,L]),
\]
with 
\[
|\hat{u}(x)-\hat{u}(y)|\leq \left\|\frac{d \hat{u}}{dx}\right\|_{L^{m}(0,L)}|x-y|^{\frac{1}{m'}}\ \ \forall\ x\in[0,L]\ \forall\ y\in[0,L],
\]
\noindent which implies that when  $\hat{u} \in W_0^{1,m}(0,L)$, one has 
\[
|\hat{u}(x)| \le \left\|\frac{d \hat{u}}{dx}\right\|_{L^{m}(0,L)}x^{\frac{1}{m'}}\quad \forall x \in [0,L],
\]
so that, since 
$
\dys\gamma \ge \frac{1}{m-1},
$
one has
\[
\int_{0}^{L} \phi_\gamma(\hat{u})^m dx = \int_{0}^{L} \frac{1}{|\hat{u}|^{\gamma m}} dx \ge \dys \frac{1}{\dys \left\|\frac{d \hat{u}}{dx}\right\|_{L^{m}(0,L)}^{\gamma m}} \int_{0}^{L} \frac{1}{x^{ (m-1)\gamma}} dx = +\infty.
\]

\smallskip 
Therefore $\phi_\gamma(\hat{u})$ does not belong to $L^m(0,L)$ when $\hat{u}$ belongs to $W_0^{1,m}(0,L)$ and when
$
\dys \gamma \ge \frac{1}{m-1},
$
which proves the desired result (and Theorem \ref{t3.2}) in the model case where $\dys\phi(s) = \phi_\gamma(s) = \frac{1}{|s|^\gamma}$. 

\medskip

In the general case of a singular function  $\phi$ which satisfies  \eqref{condphi1}, \eqref{p3}, and \eqref{m-1}, the proof follows along  the lines of the proof of \cite[Proposition 4.8]{GMMP1}. 

\medskip 

We claim on the first hand  that if $m>1$ and if 
  the singular function  $\phi$   satisfies  \eqref{condphi1}, \eqref{p3}, and 
\begin{equation} \label{p+} 
\dys\int_0^\delta|\phi (t)|^{m-1}\,dt=+\infty  \ \ \text{for   $\delta>0$},
 \end{equation}
 and if  $\hat{u}$ satisfies \eqref{3000},   
  one has 
\begin{equation}
\label{412<0}
\hat{u}(x) \leq 0 \quad \forall x\in[0,L],
\end{equation}
and we claim  on the second hand that if  $m>1$ and if 
  the singular function  $\phi$   satisfies  \eqref{condphi1}, \eqref{p3}, and
\begin{equation} \label{p-} 
  \dys\int_{-\delta}^0|\phi(t)|^{m-1}\,dt=+\infty \ \ \text{for some $\delta>0$},
 \end{equation} and if  $\hat{u}$ satisfies \eqref{3000},   
  one has 
\begin{equation}
\label{412>0}
\hat{u}(x) \geq 0  \quad \forall x\in[0,L].
\end{equation}
 
\indent Therefore if the singular function $\phi$ satisfies \eqref{condphi1}, \eqref{p3}, and \eqref{m-1}, namely \eqref{condphi1}, \eqref{p3}, \eqref{p+}, and \eqref{p-}, and if ${u}$ satisfies \eqref{3000}, the claim implies that $u \equiv 0$, in contradiction with \eqref{3000}  since $\phi(0)\equiv +\infty$ does not belong to $L^{m}(0,L)$. This contradiction proves that it does not exist any $\hat{u}$ which satisfies \eqref{3000} when the singular function $\phi$ satisfies \eqref{condphi1}, \eqref{p3}, and \eqref{m-1}, and proves Theorem \ref{t3.2}. 
 
%We will prove that hypothesis \eqref{p+}  implies \eqref{412<0}. In  the case where \eqref{p-} is assumed instead of \eqref{p+}, the proof of \eqref{412>0} analogue. Also observe that, by \eqref{p3}, we may assume without loosing generality that $\phi\geq 0$ for $x\in [-\delta,\delta]$. 

\medskip 

Let us prove the first part of the claim,  namely the fact that  \eqref{condphi1}, \eqref{p3},  \eqref{p+}, and \eqref{3000} imply \eqref{412<0}. The proof of the second part of the claim is similar. 

Observe first that in view of \eqref{condphi1} and \eqref{p3}, one can  choose $\delta$ such that 
\begin{equation}
\label{3001}
\phi(s) >0  \quad \forall s\in[0,\delta].
\end{equation}

Assume  now  by contradiction with  \eqref{412<0} that    \eqref{condphi1}, \eqref{p3},  \eqref{p+}, and \eqref{3000} hold true but that there exists  some $x_0$ such that \begin{equation}\label{3002} 0<x_0<L \ \ \text{with} \ \ \hat{u}(x_0)>0. \end{equation}

Define $y_0$ by 
\begin{equation}
\label{411bis} 
y_0=\inf\{x\in[0,x_{0}] \ \text{such that}\  \hat{u}(z)>0 \ \text{for all}\  z\in(x,x_{0}]\}
\end{equation}
(one could also  consider $y_1$ defined by  $y_1=\sup\{x\in [x_0,L] \ \text{such that}\  \hat{u}(z)>0 \ \text{for all}\  z\in [x_{0},x)$). Observe that $y_0$ is well defined  since the set $\{x\in [0,x_0] \ \text{such that}\  \hat{u}(z)>0 \ \text{for all}\  z\in(x,x_{0}]\}$ is not empty: indeed $\hat{u}$ is continuous and in view of \eqref{3002} there exists some $\tau>0$ such that $\hat{u}(x)>0$ on $[x_0-\tau,x_0+\tau]$.

On the other hand,   since $\hat{u}(x)>0$ for every $x\in(y_0,x_0]$, one has  $\hat{u}(y_0)\geq 0$, but  actually one has 
\begin{equation}
\label{414bis}
\hat{u}(y_0)=0:
\end{equation}
indeed  if $\hat{u}(y_0)>0$, it would  exist some  $\tau>0$ such that $\hat{u}(x)>0$ on $[y_0-\tau,y_0+\tau]$, a contradiction with the definition \eqref{411bis} of $y_0$.

\medskip 

\indent Fix now some $\delta>0$ such that \eqref{3001} is satisfied,  and define the function  $\hat{\psi}_{\delta}: \ ]0,+\infty[\mapsto\re$ by
$$
\hat{\psi}_{\delta}(s)=\int_s^\delta \phi(t)^{m-1} dt\quad \forall s>0,
$$
or equivalently by
$$
\hat{\psi}_{\delta}(\delta)=0,\quad \hat{\psi}'_\delta(s)=-\phi(s)^{m-1}\quad \forall s>0. 
$$
 
 \noindent Then \eqref{p+} is equivalent to 
\begin{equation}
\label{451}
\hat{\psi}_{\delta}(s)\to+\infty \quad \mbox{ as } s\to 0,\,s>0.
\end{equation}

Recalling that $\hat{u}$ is continuous, we define    for $\eta$ such that  $0<\eta<x_0-y_0$ the two real numbers $\underline{u}_\eta$ and $\overline{u}$ by
$$
\underline{u}_\eta=\min_{x\in[y_0+\eta,x_0]}u(x),\quad \overline{u}=\max_{x\in[y_0,x_0]}u(x),
$$
and we observe that 
$$
\forall \eta\quad \text{such that}\quad 0<\eta<x_0-y_0, \mbox{ one has } \,\, 0<\underline{u}_\eta\leq \hat{u}(x)\leq \overline{u}<+\infty\quad \forall x\in [y_0+\eta,x_0].
$$
We also define   the two real numbers $\underline{\phi}$ and $\overline{\phi}_\eta$ by 
$$
\underline{\phi}=\min_{s\in[0,\overline{u}]}\phi(s),\quad \overline{\phi}_\eta=\max_{s\in [\underline{u}_\eta,\overline{u}]}\phi(s),
$$
and we observe that $ \overline{\phi}_\eta$ is finite for every $\eta$ such that  $0<\eta<x_0-y_0$, even if $\overline{\phi}_\eta$ tends to infinity as $\eta$ tends to $0$.

Then since $\hat{u}\in W^{1,m}(y_0+\eta,x_0)$ and since 
$\phi^{m-1} \in C^0_b([\underline{u}_\eta,\overline{u}]), \mbox{which implies that }   \hat{\psi}_{\delta}\in C^1([\underline{u}_\eta,\overline{u}]),
$
one has the chain rule 
$$
\phi(\hat{u})^{m-1}\frac{d\hat{u}}{dx}=-\hat{\psi}'_\delta(\hat{u})\frac{d\hat{u}}{dx}=-\frac{d\hat{\psi}_{\delta}(\hat{u})}{dx} \ \  \mbox{ in } \ \ L^m(y_0+\eta,x_0),
$$
and therefore one has 
$$
\int_{y_0+\eta}^{x_0}\phi(\hat{u})^{m-1}\frac{d\hat{u}}{dx}dx=\int_{y_0+\eta}^{x_0}-\frac{d\hat{\psi}_{\delta}(\hat{u})}{dx} dx=\hat{\psi}_{\delta}(\hat{u}(y_0+\eta))-\hat{\psi}_{\delta}(\hat{u}(x_0)) \ \  \forall \eta \ \ \text{such that} \ \  0<\eta<x_0-y_0. 
$$
Recall that   $\hat{\psi}_{\delta}(\hat{u}(x_0))$ is  a finite number, while in view of \eqref{414bis} and \eqref{451} one has
$$
\hat{u}(y_0+\eta)\to 0 \ \  \mbox{and} \ \  \hat{\psi}_{\delta}(\hat{u}(y_0+\eta))\to +\infty \ \  \mbox{as} \ \ \eta\to 0,\ \  \eta>0;
$$
this  implies that 
\begin{equation}
\label{452}
\int_{y_0+\eta}^{x_0} \phi(\hat{u})^{m-1}\frac{d\hat{u}}{dx} dx \to +\infty \ \   \mbox{as}\ \  \eta\to 0,\,\eta>0,
\end{equation}
{a contradiction} with \eqref{3000} which implies  that $|\phi(\hat{u})|^{m-1}\in L^{m'}(0,L)$ and  $\dys \frac{d\hat{u}}{dx}\in L^{m}(0,L)$. 

\indent This proves the first part of the claim and therefore Theorem \ref{t3.2}.

\end{proof}
\bk 

The second non-existence result of this subsection is of a different kind. It is concerned with the case of a general singular function $\phi$ which only satisfies \eqref{condphi1} and \eqref{p3} in the case where the datum $g$ is bounded from below, in the framework of weak solutions of problem \eqref{pb1} either in the sense of Definition $1$  (Definition \ref{defin} above) if $m=1$ or in the sense of Definition $m$  (Definition \ref{definm} above) if $m>1$. This result was proved in \cite[Theorem 4.1]{GMMP1} when $m=2$.

\begin{theorem}\label{t3.3} Assume that \eqref{cond1} holds true, and that the data  $(a, g, \phi)$ satisfy \eqref{a1}--\eqref{p3}. 
If there exists a constant $M > 0$ such that the  datum $g\in L^{m}(0,L)$ satisfies
\begin{equation} \label{3.10}
g(x) \ge -M \quad \text{for a.e. } x \in (0,L),
\end{equation}
then it does not exist any weak solution of problem \eqref{pb1} for the data $(a, g, \phi)$ neither in the sense of Definition $1$  (Definition \ref{defin} above)  if $m=1$ nor in the sense of Definition $m$ (Definition \ref{definm} above) if $m>1$.
\end{theorem}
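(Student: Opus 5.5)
The plan is to work with the ODE reformulation \eqref{definODE} of Remark~\ref{r1.4}. It applies to both Definition $1$ ($m=1$) and Definition $m$ ($m>1$), so the two cases can be treated at once. Assume by contradiction that $u$ is a weak solution. Then $u\in W^{1,1}_0(0,L)$, so $u$ is absolutely continuous on $[0,L]$ with $u(0)=u(L)=0$, and $\phi(u)\in L^1(0,L)$. There is also a constant $c\in\re$, given by \eqref{definC}, such that
\begin{equation*}
a(x)\frac{du}{dx}=\phi(u)+g+c \quad\text{a.e. in }(0,L).
\end{equation*}
Since $\phi(0)=+\infty$ and $\phi(u)\in L^1(0,L)$, the set $\{u=0\}$ has measure zero, so the right-hand side is finite a.e.

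The key step uses \eqref{condphi1}, \eqref{p3} and \eqref{3.10}. Continuity of $\phi$ with values in $\re\cup\{+\infty\}$ and $\phi(0)=+\infty$ give a $\delta>0$ such that $\phi(s)\ge M-c+1$ for all $|s|\le\delta$. Then \eqref{3.10} yields $a(x)\frac{du}{dx}\ge 1$ a.e. on the set $\{x: |u(x)|<\delta\}$. Using \eqref{a1}, this becomes
\begin{equation*}
\frac{du}{dx}\ge \frac{1}{\beta}\quad\text{a.e. on the open set } \mathcal{O}=\{x\in[0,L]: |u(x)|<\delta\}.
\end{equation*}
By absolute continuity, $u(y)-u(x)=\int_x^y \frac{du}{dx}\,dt\ge (y-x)/\beta$ whenever $[x,y]$ lies in $\mathcal O$. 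Thus $u$ is strictly increasing on every interval contained in $\mathcal{O}$.

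Now derive the contradiction. Since $u(0)=0$, some interval $[0,\tau)$ lies in $\mathcal O$, so $u>0$ on $(0,\tau)$. Since $u(L)=0$, likewise $u<0$ on $(L-\tau,L)$. Set $x^*=\inf\{x\in[\tau/2,L): u(x)=0\}$; it exists by the intermediate value theorem and satisfies $x^*\in(0,L)$. Moreover $u>0$ on $(0,x^*)$ and $u(x^*)=0$. But $x^*\in\mathcal O$, so $u$ is strictly increasing on a neighbourhood $(x^*-\sigma,x^*+\sigma)$, which forces $u<0$ just to the left of $x^*$. This contradicts $u>0$ on $(0,x^*)$. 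Hence no weak solution exists.

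The main point requiring care is passing from the distributional identity \eqref{definODE} to the pointwise a.e. lower bound on $\frac{du}{dx}$ on $\mathcal O$, and then to strict monotonicity. This is handled by the fact that $W^{1,1}$ functions on an interval are absolutely continuous with a.e. derivative equal to the distributional one. It also relies on $\phi(u)$ being finite a.e., which is exactly where $\phi(u)\in L^1(0,L)$ is used. Note that the constant $c$, although unknown, is just a fixed real number, which is all the choice of $\delta$ needs.
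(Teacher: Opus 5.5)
Your proof is correct and takes essentially the same route as the paper. The paper transposes the argument of \cite[Lemma 4.3 and Theorem 4.1]{GMMP1} from $H^1(0,L)$ to $W^{1,1}(0,L)\subset C^0([0,L])$: the blow-up of $\phi$ at $0$, combined with $g\ge -M$ in \eqref{definODE}, forces $u$ to be strictly increasing near each of its zeros, which is incompatible with $u(0)=u(L)=0$. Your version makes this argument self-contained using only the absolute continuity of $u$.
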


\begin{proof} The proof of Theorem \ref{t3.3}    consists in following along the lines of the proofs of \cite[Lemma 4.3 and Theorem 4.1]{GMMP1}, just   modifying the hypotheses as follows:

 In the proof of Lemma 4.3 of \cite{GMMP1}, one replaces in (4.2) of \cite{GMMP1} the assumption $w \in H^1(0,L)$ by the new assumption $w \in W^{1,1}(0,L)$ (and incidentally the assumption $\phi(w) \in L^2(0,L)$ by the new assumption $\phi(w) \in L^1(0,L)$, but this does not play any role in the proof), and then one  uses the embedding $W^{1,1}(0,L) \subset C^0([0,L])$, which implies that $\phi(w)$ is a continuous function on $[0,L]$ with values in $\mathbb{R}\cup \{+\infty\}$.

\smallskip
Similarly, in the third line of the proof of Theorem 4.1 of \cite{GMMP1}, one replaces  the   assumption $u \in H^1(0,L)$ by the new assumption $u \in W^{1,1}(0,L)$ (and incidentally the assumption $\phi(u) \in L^2(0,L)$ by the new assumption $\phi(u) \in L^1(0,L)$, but this does not play any role in the proof), and then one uses again the embedding $W^{1,1}(0,L) \subset C^0([0,L])$, which implies that $u \in C^0([0,L])$.

\end{proof}

\subsection{Existence of weak solutions of problem \eqref{pb1} in the sense of Definition~$1$   or   of Definition~$m$}
\label{sub3.2}

\mbox{ }
\medskip 

 In this subsection we prove that there exist (large sets of) data $g$ in $L^1(0,L)$ (and in $L^m(0,L)$ for $m>1$) such that problem \eqref{pb1} has a weak solution in the sense of Definition $1$ (Definition \ref{defin} above) (and in the sense of Definition $m$ (Definition \ref{definm} above)). These results are stated and proved only in the model case \eqref{pM} where the singular function $\phi$ is given by
   \[
\phi(s) = \phi_\gamma(s) = \frac{1}{|s|^\gamma} \quad \text{with } \gamma > 0,
\]
when  $m$ and $\gamma$ satisfy either \eqref{3.1}, namely
\begin{equation*}  
 m=1, \ \ \  \gamma>0,
\end{equation*}
or \eqref{3.2}, namely
\begin{equation*}  
m>1, \ \ \ 0 < \gamma < \frac{1}{m-1}.
\end{equation*}

  \noindent Moreover recall (see the introduction of Section \ref{sec3} above) that together with Theorem~\ref{t3.2}, this result proves that in the model case where $\dys \phi(s) = \phi_{\gamma}(s) = \frac{1}{|s|^{\gamma}}$ with $\gamma > 0$, conditions \eqref{3.1} and \eqref{3.2} are actually necessary and sufficient conditions to   ensure the existence of (a large set) of data in $L^1(0, L)$ (and in $L^m(0, L)$ for $m > 1$) for which problem \eqref{pb1} has at least a weak solution in the sense of Definition $1$ (and in the sense of Definition~$m$).

%\begin{remark}
%\label{rem3.1}
%  In \eqref{3.13}, the restriction
%\[
%0<\gamma < \frac{1}{m-1}
%\]
%is suggested  by the non-existence result given in Theorem \ref{t3.2} above. 
%
%\indent Actually,  in view of Theorem \ref{t3.2} and of Theorem~\ref{t3.4} below, this restriction is a necessary and sufficient condition in order to have the existence of data $g \in L^{m}(0,L)$  for which problem \eqref{pb1} has at least a weak solution in the sense of Definition $m$ (Definition \ref{definm} above).
%
%\qed
%\end{remark}

\begin{theorem}\label{t3.4} 
Assume that \eqref{cond1} holds true, and that the data  $(a, g, \phi)$ satisfy \eqref{a1}, \eqref{g1}, and \eqref{pM}, as well as \eqref{3.1} or \eqref{3.2}. Then there exists a large set of data $g\in L^m(0,L)$ for which problem \eqref{pb1} has at least a weak solution in the sense of Definition $1$ (Definition \ref{defin} above)  if $m=1$ or in the sense of Definition $m$ (Definition \ref{definm} above) if $m>1$.
\end{theorem}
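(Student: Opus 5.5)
The plan is to build the datum from the solution: first choose a function $w$, then define $g$ so that $w$ solves the equation. This is the same idea as \cite[Proposition 7.3]{GMMP1}. By Remark \ref{r1.4}, it suffices to find $w\in W^{1,m}_0(0,L)$ with $\phi_\gamma(w)\in L^m(0,L)$. Once we have such a $w$, we set
\[
g:=a(x)\frac{dw}{dx}-\phi_\gamma(w).
\]
Then $g\in L^m(0,L)$, because $a\in L^\infty$, $\frac{dw}{dx}\in L^m$ and $\phi_\gamma(w)\in L^m$. Moreover \eqref{definODE} holds with $c=0$, so $w$ is a weak solution in the sense of Definition $1$ (if $m=1$) or Definition $m$ (if $m>1$). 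When $m=1$ the test functions are Lipschitz, and these are admissible since $\frac{dw}{dx},\phi_\gamma(w),g\in L^1$. The statement "large set" will follow because the admissible $w$ form a large family, and $g$ can also be shifted by arbitrary constants (Remark \ref{r1.4}).

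The main step is to construct the functions $w$. I would fix finitely many points $0=x_0<x_1<\dots<x_n<x_{n+1}=L$ and a positive function $\rho\in C^1([0,L])$, with $\rho>0$ in a neighbourhood of each $x_i$. Then I would take
\[
w(x)=\rho(x)\prod_{i=0}^{n+1}|x-x_i|^{\theta}\,\sigma(x),
\]
where $\sigma$ is an arbitrary sign pattern, constant between consecutive zeros. More generally, one can take any $w$ that is $C^1$ away from the $x_i$, nonvanishing except at the $x_i$, and satisfies $|w(x)|\sim|x-x_i|^\theta$ near each $x_i$. The exponent $\theta>0$ has to meet two conditions:
\begin{itemize}
\item $\frac{dw}{dx}\sim|x-x_i|^{\theta-1}$ must lie in $L^m$, which means $m(1-\theta)<1$;
\item $\phi_\gamma(w)\sim|x-x_i|^{-\gamma\theta}$ must lie in $L^m$, which means $\gamma\theta m<1$.
\end{itemize}
Away from the $x_i$, $|w|$ is bounded below, so both functions are bounded there.

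It remains to check that such a $\theta$ exists under \eqref{3.1} or \eqref{3.2}.
\begin{itemize}
\item If $m=1$: any $\theta\in(0,1/\gamma)$ with $\theta<\infty$ works, since $\theta-1>-1$ holds automatically. For example $\theta=\min(1,1/(2\gamma))$.
\item If $m>1$: we need $1-\frac1m<\theta<\frac1{\gamma m}$. This interval is nonempty exactly when $\gamma(m-1)<1$, which is \eqref{3.2}. This is the one place where the restriction on $\gamma$ enters, and it matches Theorem \ref{t3.2}.
\end{itemize}
With this choice, $w(0)=w(L)=0$, $w\in W^{1,m}_0(0,L)$ (being absolutely continuous with derivative in $L^m$), and $\phi_\gamma(w)\in L^m$.

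The step I expect to be the main obstacle is the precise verification near the zeros, because $\phi_\gamma(w)$ must be controlled even though $w$ vanishes. The lower bound $|w(x)|\ge c|x-x_i|^\theta$ near $x_i$ is what gives this control, and it requires $\rho$ to be bounded away from $0$ and the product factors for the other points to be bounded below, which holds locally. To show the set of data is large, I would vary $\rho$, the number and position of the points $x_i$, the sign pattern $\sigma$, and $\theta$ within its admissible interval, and I would add arbitrary smooth perturbations supported away from the $x_i$ that keep $|w|$ positive there. Each such choice gives a distinct $g$, since $g$ determines $w$ through the ODE given the constant.
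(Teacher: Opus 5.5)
Your proposal is correct and is essentially the paper's argument, which deduces Theorem~\ref{t3.4} from Proposition~\ref{glue}. The paper also prescribes $\hat u$ vanishing only at finitely many points, with power behaviour of exponent in $\bigl(\frac{m-1}{m},\frac{1}{\gamma m}\bigr)$ and constant sign between consecutive zeros, and then sets $\hat g=a\frac{d\hat u}{dx}-\phi_\gamma(\hat u)-\hat c$. The only difference is that it glues local pieces $K(x-x_k)^{\lambda}$, whose exponents and constants may vary with the zero and the side, to arbitrary $W^{1,m}$ interior functions bounded away from $0$, instead of using your single product formula.

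Two side remarks of yours are imprecise, though neither is needed for the statement. Your ``more general'' class also requires a derivative bound such as $|w'(x)|\le C|x-x_i|^{\theta-1}$, not just $|w|\sim|x-x_i|^{\theta}$. And your claim that $g$ determines $w$ is justified (by local Lipschitz uniqueness) only where $w\neq0$, since uniqueness through a zero of $w$ is not available in general.
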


This result will be an immediate consequence of Proposition~\ref{glue} below.
\smallskip

Let us first introduce the definition of the set $\mathcal{U}_{m}^{\phi}(0,L)$. 

%We define  the set $\mathcal{G}_{m}^{\phi_{\gamma}}$ of the ``good data'' by
%\begin{equation} \label{3.4}
%\mathcal{G}_{m}^{\phi_{\gamma}} = \left\{ g \in L^{1}(0,L) : \text{at least a solution of problem \eqref{pb1} with data } (a, g, \phi_\gamma) \text{ exists} \right\}
%\end{equation}
%in the sense of Definition \ref{definm} above, 

\begin{defin}
\label{defin3.5}
For $m\geq 1$ and $\phi$ a function which satisfies  \eqref{condphi1} (and possibly \eqref{p3}),  define  the set $\mathcal{U}_{m}^{\phi}(0,L)$ by
\begin{equation} \label{3.3}
\mathcal{U}_{m}^{\phi}(0,L) = \left\{ \hat{u} \in W_{0}^{1,m}(0,L) \text{ such that } \phi (\hat{u}) \in L^{m}(0,L) \right\}. 
\end{equation} \hfill\qed
\end{defin}

 \indent We will naturally denote by $\mathcal{U}_m^{\phi_\gamma}$, $\gamma > 0$, the corresponding set concerned by the case where $\phi$ is the function $\phi_\gamma$ of the model case \eqref{pM}.

\indent A similar set has been  introduced in \cite[(4.13) and (7.2)]{GMMP1},   in the case where $m=2$, where it was  denoted by   $\mathcal{U} $.
\bigskip

It is clear that every weak solution of problem \eqref{pb1}  in the sense of Definition $1$ (Definition \ref{defin} above)  if $m=1$ or in the sense of Definition $m$ (Definition \ref{definm} above) if $m>1$  belongs to $\mathcal{U}_{m}^{\phi}(0,L)$.

Conversely,  for every $\hat{u} \in \mathcal{U}_{m}^{\phi}(0,L)$ and every $\hat{c} \in \mathbb{R}$, define the function $\hat{g}$   by
\begin{equation} \label{3.5}
\hat{g}(x) = a(x)\frac{d\hat{u}}{dx} - \phi(\hat{u}) - \hat{c} \,.
\end{equation}
In view of Remark~\ref{r1.4} above, the function $\hat{u}$ is  a weak solution of problem \eqref{pb1}  for the data $(a, \hat{g}, \phi)$ in the sense of Definition \ref{defin} if $m = 1$ or in the sense of Definition \ref{definm} if $m > 1$. 

Actually, when problem \eqref{pb1} has a weak solution in the sense of Definition \ref{defin} or of Definition \ref{definm} for the data $(a, \hat{g}, \phi)$, then the datum $\hat{g}$ is of the form  \eqref{3.5} for some $\hat{u} \in \mathcal{U}_{m}^{\phi} (0,L)$ and some $\hat{c} \in \mathbb{R}$.

The study of the set $\mathcal{U}_{m}^{\phi}(0,L)$ is therefore essential when studying the existence of weak solutions of problem \eqref{pb1} in the sense of Definition $1$ (Definition \ref{defin} above)  if $m=1$ or in the sense of Definition~$m$ (Definition \ref{definm} above) if $m>1$.

\medskip

\indent In what follows, we will only study the set $\mathcal{U}_m^{\phi_\gamma}$ where $\phi_\gamma$ is the model function given by \eqref{pM}, since we will prove Theorem \ref{t3.4} only in this model case. We will always assume that \eqref{3.1} or \eqref{3.2} holds true. 

Let us now show a way of constructing elements $u$ of $ \mathcal{U}_{m}^{\phi_\gamma}(0,L)$.
A similar construction has been done in [1, Subsection 7.2] in the case where $m=2$.

\indent Define for every $y\in [0,L)$, $\delta^r>0$, $K^r\in \mathbb{R}$, $K^r\not=0$, and $\lambda^r>0$, the function $w^r$ (where $``r"$ stands for ``right") by the formula 
\begin{equation}\label{3.57}
w^r(x)=K^r(x-y)^{\lambda^{r}}\ \ \text{for}\  y\leq x\leq y+\delta^r\,.
\end{equation}
 Since
$$
\frac{dw^{r}}{dx} = K^r \lambda^{r}(x-y)^{\lambda^{r} -1}\ \ \text{and}\ \ \phi_{\gamma}(w^{r})= \frac{1}{|K^{r}|^\gamma (x-y)^{\lambda^{r}\gamma} } \ \ \text{for}\  y\leq x\leq y+\delta^r,
$$
the functions $\displaystyle\frac{dw^{r}}{dx}$ and  $\phi_\gamma(w^{r})$ belong to $L^m(y,y+ \delta^r)$ if and only if one chooses $\lambda^r$ such that 
\begin{equation}\label{lamb}
\frac{m-1}{m}<\lambda^r<\frac{1}{\gamma m}. 
\end{equation}
Such a  choice is possible in view of assumption  \eqref{3.1} or \eqref{3.2}. Let us make such a choice. 

\smallskip 
In  the same way, define for every  $y\in(0,L]$, $\delta^\ell>0$, $K^\ell\in \mathbb{R}$, $K^\ell\not=0$, and $\lambda^\ell>0$, the function $w^\ell$ (where $``\ell"$ stands for ``left") by the formula 

\begin{equation}\label{3.6}
w^{\ell}(x)=K^{\ell}(y-x)^{\lambda^{\ell}}\ \ \text{for}\  y-\delta^\ell\leq x\leq y\,.
\end{equation}
Then the functions $\displaystyle\frac{dw^{\ell}}{dx}$ and  $\phi_{\gamma}(w^{\ell})$ belong to $L^m(y-\delta^\ell,y)$  if and only if one chooses $\lambda^\ell$ such that  
\begin{equation}\label{lambl}\frac{m-1}{m}<\lambda^\ell<\frac{1}{\gamma m}. \end{equation} 
  
  \noindent Let us make such a choice. 
  
  Take now   $x_1$, $x_2$ and $\delta_1^r>0$, $\delta_2^\ell>0$ such that
\begin{equation}
\label{3.10bis}
0 \le x_1 < x_1+\delta_1^r < x_2-\delta_2^\ell < x_2 \le L,
\end{equation}
and denote by $w_1^r$ the function defined by \eqref{3.57} with $y=x_1$, $\delta^r=\delta^r_1$, $K^r=K^r_1$, and $\lambda^r=\lambda^r_1$, and by $w_2^\ell$ the function defined by \eqref{3.6} with $y=x_2$, $\delta^\ell=\delta^\ell_2$, $K^{\ell}=K^{\ell}_2$, and $\lambda^{\ell}=\lambda^{\ell}_2$.

Assume now that \begin{equation}\label{3.211}K_{1}^r K_{2}^\ell > 0,\end{equation} or in other terms,  that $K_1^r$ and $K_2^\ell$ (and also $w_1^r$ and $w_2^\ell$) have the same sign.

Take then any function $w^{int}_{1,2} \in W^{1,m}(x_1+\delta_1^r, x_2-\delta_2^\ell)$ which satisfies
\begin{equation}\label{3.212}
w_{1,2}^{int}(x_1+\delta_1^r) = w_1^r(x_1+\delta_1^r) = K_1^r {\delta_1^{r}}^{\lambda_1^{r}} \ \ \text{and}\ \ 
w_{1,2}^{int}(x_2-\delta_2^\ell) = w_2^\ell(x_2-\delta_2^\ell) = K_2^\ell {\delta_2^{\ell}}^{\lambda_2^{\ell}},
\end{equation}
as well as, for some $\eta_{1,2} > 0$,
 \begin{equation}\label{3.213}|w_{1,2}^{int}(x)| \ge \eta_{1,2} > 0\,\,\,\,  \forall x\in [x_1+\delta_1^r, x_2-\delta_2^\ell].\end{equation}

\indent Define finally on the segment $[x_1,x_2]$  the function $w_{1,2}$ by
\begin{equation} \label{3.9}
w_{1,2}(x) =
\begin{cases}
w_1^r(x) & \text{if } x \in [x_1, x_1+\delta_1^r], \\
w^{int}_{1,2}(x) & \text{if } x \in [x_1+\delta_1^r, x_2-\delta_2^\ell], \\
w_2^\ell(x) & \text{if } x \in [x_2-\delta_2^\ell, x_2],
\end{cases}
\end{equation}
 and, for any given constant $c_{1,2}\in\mathbb{R}$, define on the segment $[x_1,x_2]$ the function $h_{1,2}$ by
\begin{equation}
\label{3.14bis}
h_{1,2}(x)=a(x)\frac{ dw_{1,2}}{dx}-\phi_{\gamma}(w_{1,2})-c_{1,2}\, \ \text{ a.e. }\ x\in (x_1,x_2).  
\end{equation}
It is easy  to check that  $w_{1,2}\in  \mathcal{U}_{m}^{\phi_\gamma}(x_1,x_2)$ and that $h_{1,2}\in L^m(x_1,x_2)$. 

\medskip

Note that in this construction we have a large liberty in the choices of the points $x_1$ and $x_2$, of the parameters $\delta_1^r$ and $\delta_2^\ell$, of the constants $K_1^r$ and $K_2^{\ell}$, of the powers $\lambda_1^r$ and $\lambda_2^{\ell}$, of the parameter $\eta_{1,2}$, of the function $w^{int}_{1,2}$, and finally of  the constant $c_{1,2}$. 

\medskip
For $n\in\mathbb{N}$ fix now $n+2$ points $x_0, x_1,\, \cdots\,, x_n, x_{n+1}$ such that
\begin{equation} \label{3.1013}
0 = x_0 < x_1 <  \,\cdots\, < x_n < x_{n+1} = L, 
\end{equation}
and choose on each segment $[x_j, x_{j+1}]$ for $j=0, 1, \,\cdots \,, n,$   two functions $ {w}_{j,j+}1$ and ${h}_{j,j+1}$ by the construction we have just made, with parameters $\delta_j^r$ and $\delta_{j+1}^\ell$ which satisfy \eqref{3.10bis}, with constants $K_j^r$ and $K_{j+1}^\ell$ which satisfy \eqref{3.211}, with powers $\lambda_j^r$ and $\lambda_{j+1}^\ell$ which satisfy \eqref{lamb} and \eqref{lambl}, with parameters $\eta_{j,j+1}$ which satisfy $\eta_{j,j+1}>0$, with functions $w_{j,j+1}^{int}$ which satisfy \eqref{3.212} and \eqref{3.213}, and finally with constants $c_{j,j+1}\in\mathbb{R}$. Then  ``glue together" the segments $[x_j, x_{j+1}]$  as well as the functions $w_{j,j+1}$ and $h_{j,j+1}$,  $j=0,1,\, \cdots\, , n $  to construct on $[0,L]$ two functions $\hat{u}\in W_0^{1,m}(0,L)$ and $\hat{g}\in L^m(0,L)$. We have proved the following  result.

\begin{proposition}\label{glue} Assume that \eqref{cond1} holds true, and that the data  $(a, g, \phi)$ satisfy \eqref{a1}, \eqref{g1}, and \eqref{pM}, as well as \eqref{3.1} or \eqref{3.2}. For $n \in \mathbb{N}$ fix $n+2$ points $x_k$ which satisfy \eqref{3.1013}. 
Then there exists a large set of triplets $(\hat{u}, \hat{g},\hat{c})$ which satisfy
\begin{equation}\begin{cases}
      \hat{u} \in \mathcal{U}_m^{\phi_\gamma}(0,L),\\
   \hat{u}(x_k) = 0 \quad\forall  k=0, \,\cdots\,, n+1,\footnotemark\\
   \hat{g}\in L^{m}(0,L),\\
   \dys \hat{c}\in\mathbb{R},\\
 \dys\hat{g}(x)= a(x)\frac{d\hat{u}}{dx} - \phi_{\gamma}(\hat{u})-\hat{c}   \   \ \text{in}\ \mathcal{D}'(0,L),
   \end{cases}
   \end{equation}\footnotetext{Note also, incidentally, that $\hat{u}$ satisfies $\hat{u}(x)\neq 0$ for every $x\neq x_k$, $k=0,\, \cdots\,,n+1$.}\noindent or in other words a large set of functions $\hat{u}$ which are such that   $\hat{u}$ is a weak solution of problem \eqref{pb1} for the data $(a, \hat{g}, \phi_\gamma)$ in the sense of Definition $1$ (Definition \ref{defin} above)  if $m=1$ or in the sense of Definition $m$ (Definition \ref{definm} above) if $m>1$. 
\end{proposition}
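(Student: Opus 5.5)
The proof is a verification that the gluing construction described just before the statement produces admissible triplets. We work segment by segment and then glue.

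\emph{Step 1: local pieces near the zeros.} On a right piece $[y,y+\delta^r]$ we have $w^r=K^r(x-y)^{\lambda^r}$. Its derivative $K^r\lambda^r(x-y)^{\lambda^r-1}$ lies in $L^m$ exactly when $m(\lambda^r-1)>-1$, that is $\lambda^r>\frac{m-1}{m}$. The term $\phi_\gamma(w^r)=|K^r|^{-\gamma}(x-y)^{-\lambda^r\gamma}$ lies in $L^m$ exactly when $\lambda^r\gamma m<1$. Under \eqref{3.1} the admissible interval is $(0,\frac{1}{\gamma})$, which is nonempty. Under \eqref{3.2} the inequality $\frac{m-1}{m}<\frac{1}{\gamma m}$ is equivalent to $\gamma(m-1)<1$. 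So \eqref{lamb} can always be satisfied, and \eqref{lambl} is handled symmetrically for the left pieces. Each such piece is absolutely continuous on its closed interval, since $(x-y)^{\lambda}$ with $\lambda>0$ is AC with integrable derivative. It vanishes only at $y$, and it takes the prescribed boundary value at the junction point $y\pm\delta$.

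\emph{Step 2: interior pieces.} The function $w^{int}_{j,j+1}\in W^{1,m}$ matches the values \eqref{3.212}. By \eqref{3.211} these two values have the same sign, so \eqref{3.213} can be achieved; for instance, take the affine interpolation between them, which never vanishes. On this piece, $|w^{int}|\ge\eta_{j,j+1}$ gives $\phi_\gamma(w^{int})\le\eta_{j,j+1}^{-\gamma}$, which is bounded and hence in $L^m$. Consequently $w_{j,j+1}$, defined by \eqref{3.9}, is continuous on $[x_j,x_{j+1}]$, vanishes exactly at the two endpoints, belongs to $W^{1,m}(x_j,x_{j+1})$, and has $\phi_\gamma(w_{j,j+1})\in L^m(x_j,x_{j+1})$. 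The reason is that a continuous function which is $W^{1,m}$ on each of finitely many adjacent closed subintervals is $W^{1,m}$ on their union, because its distributional derivative is the piecewise derivative (no jump terms appear).

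\emph{Step 3: gluing.} Define $\hat u$ on $[0,L]$ to equal $w_{j,j+1}$ on each $[x_j,x_{j+1}]$. This is consistent, since each $w_{j,j+1}$ vanishes at both of its endpoints. The same continuity argument shows $\hat u\in W^{1,m}(0,L)$. Moreover $\hat u(0)=\hat u(L)=0$, so $\hat u\in W^{1,m}_0(0,L)$, and $\hat u(x_k)=0$ for all $k$. Since $\phi_\gamma(\hat u)$ is in $L^m$ on each of finitely many subintervals, it lies in $L^m(0,L)$, and therefore $\hat u\in\mathcal U^{\phi_\gamma}_m(0,L)$.

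\emph{Step 4: the datum and the constant.} We cannot simply glue the $h_{j,j+1}$ with different constants $c_{j,j+1}$ and still obtain a single $\hat c$. Instead, we fix any $\hat c\in\mathbb R$ and define $\hat g$ by \eqref{3.5}. Then $\hat g\in L^m(0,L)$ because $a\in L^\infty$ and both $d\hat u/dx$ and $\phi_\gamma(\hat u)$ lie in $L^m$. Equivalently, $\hat g$ is the glued $h_{j,j+1}$ plus the piecewise constant function $c_{j,j+1}-\hat c$, which is still in $L^m$. By Remark~\ref{r1.4}, the relation $a\,d\hat u/dx=\phi_\gamma(\hat u)+\hat g+\hat c$ in $\mathcal D'(0,L)$ is equivalent to $\hat u$ being a weak solution in the sense of Definition $1$ or Definition $m$.

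The one delicate point is the $W^{1,m}$ gluing across junction points where the pieces meet. It is resolved by continuity of the pieces at those points together with absolute continuity on each closed piece. The ``large set'' claim comes from the freedom in $x_k$, $\delta$, $K$, $\lambda$, $\eta$, $w^{int}$ and $\hat c$.
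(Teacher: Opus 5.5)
Your proof is correct and follows the paper's own argument: the paper proves Proposition~\ref{glue} by the construction that precedes it (power-type pieces with exponents satisfying \eqref{lamb}--\eqref{lambl}, a non-vanishing interior $W^{1,m}$ piece, and gluing), and you verify each step of that construction in more detail. Your Step~4 is a worthwhile clarification: gluing the $h_{j,j+1}$ with distinct constants $c_{j,j+1}$ would not by itself give a single $\hat c$, since the jumps would produce Dirac masses at the $x_k$ in the distributional equation, so one must take all $c_{j,j+1}=\hat c$, which is exactly what defining $\hat g$ directly by \eqref{3.5} does.
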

% and that $w$ is a solution of
%$$
%\begin{cases}
%\dys w\in W^{1,m}_0(x_1,x_2), \,\,\,\, \frac{1}{|w|^\gamma}\in L^m(x_1,x_2), \\
%\dys -\frac{ d}{dx}\left(a(x)\frac{ dw}{dx}\right)  = - \frac{ d \phi_\gamma (w)}{dx} 
% - \frac{ d h}{dx}  \,\,\,\, \text{in}\,\,\mathcal{D}'(x_1,x_2)\,.
%\end{cases}
%$$
\smallskip

\subsection{Every weak solution of problem \eqref{pb1} in the sense of Definition~$1$ or  of Definition~$m$  is not~isolated}
\label{sub3.3}
\mbox{ }
\smallskip

In this subsection, we prove that every weak solution $u$ of problem \eqref{pb1} in the sense of Definition~$1$ (Definition \ref{defin} above) or in the sense of Definition $m$ (Definition \ref{definm} above)  is not isolated: in other terms, we prove that if $u$ is any weak solution of problem \eqref{pb1} in the sense of Definition $1$ (Definition \ref{defin} above) or in the sense of Definition $m$ (Definition \ref{definm} above) for some data $(a,g,\phi)$, then there exists a sequence of data $(a,g_n,\phi_n)$ for which there exists at least a weak solution $u_n$ of problem \eqref{pb1} in the sense of Definition $1$ (Definition \ref{defin} above) or in the sense of Definition $m$ (Definition \ref{definm} above) which is such that $u_n\not\equiv u$ for every $n$, such that the sequence $u_n$ strongly converges to $u$ in $W_0^{1,1}(0,L)$ or in $W_0^{1,m}(0,L)$, and such that the sequence of data $g_n$ strongly converges to $g$ in $L^1(0,L)$ or in $L^m(0,L)$; moreover the sequence $\phi_n$ can be any a priori given sequence of reasonable approximations of $\phi$ (see Definition~\ref{3.7} below for the definition of this notion)  while the sequence of data $g_n$ should be chosen accordingly. 
\smallskip

The definition of the notion of a reasonable sequence of approximations of a function $\phi$ has been introduced in \cite[Definition~2.16]{GMMP1} and does not depend on the parameter $m$. It reads as follows: 
\begin{defin}
\label{3.7}
Let $\phi$ be a function which satisfies \eqref{condphi1}. We will say that a sequence $\phi_n$  is a {\it sequence of reasonable approximations of $\phi$} if the sequence $\phi_n$ satisfies
\begin{equation}
\label{3.42}
\phi_n \mbox{ satisfies assumption \eqref{condphi1} for every given $n$},
\end{equation}
\begin{equation}
\begin{cases}
\label{3.43}
\mbox{for every sequence $s_n\in\re$ and every $s\in\re$ such that $s_n\to s$ in $\re$, one has}\\
\phi_n(s_n)\to\phi(s) \mbox{ in } \re\cup\{+\infty\}.
\end{cases}
\end{equation}
 \hfill\qed
\end{defin}

The model example of a sequence of reasonable approximations of a given function $\phi$ is the approximation of $\phi$ by truncation defined by
$$
\phi_n(s)=T_n(\phi(s))\quad \forall s\in\re\quad \forall n\in\mathbb{N},
$$
where $T_n:\re\mapsto\re$ is the classical truncation function  at height $n$ defined  by
\begin{equation}
\label{3.44}
T_n(s)=
\begin{cases}
\dys s & \mbox{if} \,\, |s|\leq n,\\
\dys n\frac{s}{|s|} &\mbox{if} \,\, |s|\geq n; 
\end{cases}
\end{equation}
for other examples of sequences of reasonable approximations (the homographic approximation, the trivial approximation, the approximation by convolution, ...), and for an equivalent definition of a reasonable sequence of approximations, see \cite[Subsection~2.4]{GMMP1};  note finally that for every function $\phi$ which satisfies \eqref{condphi1} there exist sequences $\phi_n$ of reasonable approximations of $\phi$ which satisfy the domination condition
\begin{equation}
\label{3.18}
\exists\, C\in\mathbb{R} \quad \mbox{such that}\quad |\phi_n(s)|\leq C|\phi(s)|\,\,\,\, \forall s\in\re\,\,\,\, \forall n\in\mathbb{N};
\end{equation}
two examples of reasonable approximations which satisfy \eqref{3.18} with $C=1$ are the approximations by truncation and the homographic approximations, see \cite[(2.44), (2.45), and (2.46)]{GMMP1}.
\smallskip

We can now state and prove the result which mathematically expresses that any weak solution of problem \eqref{pb1} in the sense of Definition $1$ (Definition \ref{defin} above) or in the sense of Definition $m$ (Definition \ref{definm} above) is not isolated. 

\begin{proposition}\label{stab}
Assume that \eqref{cond1} holds true, and that  the data $(a,g,\phi)$ satisfy hypotheses \eqref{a1}--\eqref{condphi1}.  Assume also that there exists a  weak  solution $u$ of problem \eqref{pb1} in the sense of Definition $1$ (Definition \ref{defin} above) if $m=1$ or in the sense of Definition $m$ (Definition \ref{definm} above) if $m>1$, or equivalently assume (see Remark~\ref{r1.4} above) that there exists a function $u$ which satisfies 
\begin{equation}
\begin{cases}\label{71}
u\in W^{1,m}_0(0,L), \, \,\,\,  \phi(u)\in L^m(0,L), \\\\
\dys -\frac{d}{dx}\left(a (x) \frac{du}{dx}\right)  = - \dys\frac{d \phi (u)}{dx} - \frac{dg(x)}{dx} \ \  \text{in}\,\;\mathcal{D}'(0,L).
\end{cases}
\end{equation}
\indent Fix any  reasonable sequence of  approximations $\phi_n$ of $\phi$,     and assume that this sequence $\phi_n$ satisfies the domination condition \eqref{3.18}. 
Then there exists a sequence $g_n$  which satisfies 
\begin{equation}
\label{81ter}
\text{$g_n\in L^m(0,L)$,   \,   $g_n\to g $ strongly in $L^m(0,L)$,}
\end{equation}
  for which  there exists a weak  solution $u_n$ of problem \eqref{pb1} for the data $(a,g_n,\phi_n)$   in the sense of Definition $1$ (Definition \ref{defin} above) if $m=1$ or in the sense of Definition $m$ (Definition \ref{definm} above) if $m>1$, or equivalently (see again Remark~\ref{r1.4} above)  for which there exists a solution $u_n$ of  
\begin{equation}
\begin{cases}\label{trivial}
u_n\in W^{1,m}_0(0,L), \, \,\,\,  \phi_n(u_n)\in L^m(0,L),\\\\
\dys -\frac{d}{dx}\left(a (x) \frac{du_n}{dx}\right)  = - \dys\frac{d \phi_n (u_n)}{dx} - \frac{d g_n(x)}{dx} \ \  \text{in}\,\,\mathcal{D}'(0,L)\,,
\end{cases}
\end{equation}
which satisfies
\begin{equation}
\label{3.20bis}
u_n\not\equiv u \quad \forall n,
\end{equation}
\begin{equation}
\label{3.20ter}
u_n \rightarrow u \ \ \text{strongly in}\ \ W^{1,m}_0(0,L).
\end{equation}
\end{proposition}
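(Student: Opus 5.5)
The plan is to build $u_n$ explicitly from $u$ and then read off $g_n$ from the equation, exactly as in \eqref{3.5}. Since the data $g_n$ are ours to choose, it suffices to find $u_n\in W^{1,m}_0(0,L)$ with $\phi_n(u_n)\in L^m(0,L)$, $u_n\not\equiv u$, and $u_n\to u$ strongly in $W^{1,m}_0(0,L)$, such that
\[
g_n:=a(x)\frac{du_n}{dx}-\phi_n(u_n)-c
\]
converges to $g$ in $L^m(0,L)$. Here $c$ is the constant attached to $u$ by \eqref{definODE}--\eqref{definC}, so that $g=a\,\frac{du}{dx}-\phi(u)-c$ a.e. By Remark~\ref{r1.4}, $u_n$ is then a weak solution for the data $(a,g_n,\phi_n)$ in the sense of Definition $1$ or Definition $m$. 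We have
\[
g_n-g=a\Bigl(\frac{du_n}{dx}-\frac{du}{dx}\Bigr)-\bigl(\phi_n(u_n)-\phi(u)\bigr),
\]
and since $a\in L^\infty$ the first term tends to $0$ in $L^m$ as soon as $u_n\to u$ in $W^{1,m}_0$. Everything therefore reduces to proving $\phi_n(u_n)\to\phi(u)$ in $L^m(0,L)$.

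I would take $u_n=(1+\varepsilon_n)u$ with $\varepsilon_n>0$, $\varepsilon_n\to0$. Since $u\not\equiv0$ (because $\phi(u)\in L^m$ while, in the singular case, $\phi(0)=+\infty$; in the nonsingular case a separate argument is needed, below), one has $u_n\not\equiv u$. Clearly $u_n\in W^{1,m}_0$ and $u_n\to u$ strongly in $W^{1,m}_0$. The key point is that $u_n(x)=0$ exactly where $u(x)=0$, a set of measure zero when $\phi(0)=+\infty$. At every $x$ with $u(x)\neq0$ we have $u_n(x)\to u(x)$, so \eqref{3.43} gives $\phi_n(u_n(x))\to\phi(u(x))$, which is finite. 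Hence the convergence holds a.e.

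For domination, \eqref{3.18} gives $|\phi_n(u_n)|\le C|\phi((1+\varepsilon_n)u)|$. This is the main obstacle: we need $|\phi((1+\varepsilon)u)|$ to be controlled by an $L^m$ function uniformly in $\varepsilon$, which is not automatic for a general continuous $\phi$. I would handle it by replacing the scaling with a choice of $\varepsilon_n$ made via continuity. For fixed $n$ the map $\varepsilon\mapsto\phi_n((1+\varepsilon)u)$ is problematic, so I would instead modify $u$ only on a set where $u$ is away from $0$. Concretely, pick an interval $I=[x^*-\rho,x^*+\rho]$ on which $|u|\ge\eta>0$ (possible by continuity, since $u\not\equiv0$) and a bump $\theta\in C^\infty_c(I)$ with $\theta\not\equiv0$. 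Then set
\[
u_n=u+\varepsilon_n\theta,\qquad \varepsilon_n\|\theta\|_\infty\le\eta/2 .
\]

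With this choice, outside $I$ we have $u_n=u$, so $|\phi_n(u_n)|\le C|\phi(u)|\in L^m$. On $I$, $u_n$ takes values in the compact set $\{\eta/2\le|s|\le\|u\|_\infty+\eta\}$, on which $\phi$ is continuous and finite, hence bounded. Therefore $|\phi_n(u_n)|\le C\max|\phi|$ there. Dominated convergence then yields $\phi_n(u_n)\to\phi(u)$ in $L^m(0,L)$, and hence $g_n\to g$ in $L^m$. Finally, $\phi_n(u_n)\in L^m$ for each $n$ by the same bound.

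This localized perturbation also settles the case $\phi(0)<+\infty$, where $u\equiv0$ is allowed. In that case I would take any $I$ and $\theta$: $\phi$ is bounded on compacts, so $|\phi_n(u_n)|\le C\sup_{|s|\le\|u\|_\infty+1}|\phi(s)|$ and the same argument applies. For $m=1$ all statements hold verbatim with $L^1$ and $W^{1,1}_0$, using the embedding $W^{1,1}(0,L)\subset C^0([0,L])$ for the continuity of $u$.
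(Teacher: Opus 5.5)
Your proposal is correct and is essentially the paper's proof. The paper sets $u_n=u+z_n$, where $z_n\not\equiv 0$ is arbitrary with $z_n\to 0$ in $W_0^{1,m}(0,L)$, $z_n$ is supported in an interval $[A,B]$ on which $|u|\ge\eta$, and $\|z_n\|_{C^0}\le\eta/2$ (your $\varepsilon_n\theta$ is one such choice); it then defines $g_n=a\,\frac{du_n}{dx}-\phi_n(u_n)-c$ and proves $\phi_n(u_n)\to\phi(u)$ in $L^m(0,L)$ by the same two-region domination and Lebesgue's theorem. Your separate treatment of the case $\phi(0)<+\infty$ with $u\equiv 0$ is a small addition, since the paper tacitly assumes that some $x_0$ with $u(x_0)\neq 0$ exists.
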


In the case where $m=2$, Proposition~\ref{stab} has been proved  in \cite[Proposition~8.1 and Remark~8.2]{GMMP1}.

\begin{proof}[\bf Proof]
The proof of Proposition~\ref{stab} follows along the lines of the proof of \cite[Proposition~8.1]{GMMP1}, and surprisingly consists in defining first the solution $u_n$ and then the datum $g_n$.
\smallskip

For $u$ which satisfies \eqref{71}, fix some $x_0\in (0,L)$ with $|u(x_0)|>0$ and some interval $[A,B]\subset (0,L)$ such that for some $\eta>0$, one has 
\begin{equation}
\label{3.60}
0<A<x_0<B<L,\quad |u(x)|\geq \eta
\quad \forall x\in [A,B].
\end{equation}

Take also any sequence $z_n$ such that 
\begin{equation}
\label{3.61}
z_n\not\equiv0,\,\, z_n\in W_0^{1,m}(0,L),\,\, z_n\rightarrow 0 \quad \mbox{strongly in } \,\,  W_0^{1,m}(0,L),\,\, z_n(x)=0\quad \forall x\in [0,A]\cup [B,L].
\end{equation}

Since 
$$
|z_n(x)|= \left|\int_0^x \frac{dz_n(t)}{dx}\, dt\right|\leq \left\| \frac{dz_n}{dx}\right\|_{L^1(0,L)} \,\, \forall x\in[0,L],
$$
one has 
$$
z_n(x)\rightarrow 0 \quad \mbox{uniformly on}\,\, (0,L),
$$
so that one can assume without any loss of generality that
\begin{equation}
\label{3.63}
\|z_n\|_{C^0([0,L])}\leq \frac{\eta}{2} \quad \forall n\in \mathbb{N}.
\end{equation}

Define $u_n$ by  
\begin{equation}
\label{3.64}
u_n=u+z_n.
\end{equation}

Then in view of \eqref{3.61} and \eqref{3.64} the sequence $u_n$ satisfies \eqref{3.20bis} and \eqref{3.20ter}.

We now claim that 
\begin{equation}
\label{3.65}
\phi_n(u_n)\rightarrow \phi(u) \quad \mbox{strongly in } \,\,  L^m(0,L).
\end{equation}

Assuming for a moment that the claim \eqref{3.65} holds true, and recalling the definition of the constant $c$  from $u$, $\phi(u)$, and $g$ given by  \eqref{definC},    namely 
% from $u$, $\phi(u)$, and $g$ given by \eqref{definODE}  and \eqref{definC},   
$$
\dys c=-\frac{\dys \int_0^L \frac{\phi(u)}{a(x)}dx+\int_0^L \frac{g(x)}{a(x)}\, dx}{\dys\int_0^L \frac{1}{a(x)}\, dx},
$$ 
 we define    $g_n$ by 
\begin{equation}
\label{3.66}
\dys g_n(x)=a(x)\frac{du_n}{dx}-\phi_n(u_n)-c\ \ \text{in}\  \mathcal{D}'(0,L).
\end{equation}
\smallskip

Then $u_n$ is a solution of \eqref{trivial}, and therefore, in view of Remark~\ref{r1.4} above, $u_n$ is a weak solution of problem \eqref{pb1} for the data $(a,g_n,\phi_n)$ in the sense of Definition $1$ (Definition \ref{defin} above) if $m=1$ or in the sense of Definition $m$ (Definition \ref{definm} above) if $m>1$. Moreover, in view of \eqref{3.66}, \eqref{3.20ter}, \eqref{3.65}, \eqref{definODE}, and \eqref{definC}, the sequence $g_n$ satisfies \eqref{81ter}.
\smallskip

The proof of Proposition~\ref{stab} will be complete, once the claim \eqref{3.65} has been proved. Let us do it now.

Since the sequence $\phi_n$ is a reasonable sequence of approximations of $\phi$, property \eqref{3.43} with \eqref{3.64} and \eqref{3.61} imply that 
\begin{equation}
\label{3.67}
\phi_n(u_n(x))\rightarrow \phi(u(x)) \,\,\,\, \forall x\in [0,L].
\end{equation}

On the other hand, the domination condition \eqref{3.18} implies that 
\begin{equation}
\label{3.68}
|\phi_n(u_n(x))|\leq C\, |\phi(u_n(x))| \,\,\,\, \forall x\in [0,L] \,\,\,\,\,  \forall n\in \mathbb{N}.
\end{equation}

On  the set $(0,A)\cup(B,L)$, the property that $u_n\equiv u$ (see \eqref{3.61}  implies that $\phi_n(u_n(x))$ is dominated by the fixed function $C|\phi(u(x))|$ which belongs to $L^m((0,A)\cup (B,L))$.

On the set $[A,B]$, in view of \eqref{3.64}, \eqref{3.63}, and \eqref{3.60}, one has
$$
|u_n(x)|=|u(x)+z_n(x)|\leq \|u\|_{C^0([A,B])} +\frac{\eta}{2}\,\,\,\,\, \forall x\in[A,B] \,\,\,\,\, \forall n\in \mathbb{N},
$$

$$
|u_n(x)|=|u(x)+z_n(x)|\geq \frac{\eta}{2}\,\,\,\, \forall x \in[A,B]\,\,\,\,\, \forall n\in \mathbb{N},
$$
and therefore  
$$
\frac{\eta}{2}\leq |u_n(x)|\leq \|u\|_{C^0([A,B])} +\frac{\eta}{2}\,\,\, \forall x\in[A,B] \,\,\, \forall n\in \mathbb{N}.
$$

But in view of \eqref{condphi1}, the function $\phi$ belongs to $C^0([\delta,R])\cup C^0([-R,-\delta])$ for every $\delta$ and $R$ such that $0<\delta<R$. Therefore the function $\phi_n(u_n)$, which in view of \eqref{3.68} is dominated on $[A,B]$ by $C\,|\phi(u_n)|$, is also dominated on $[A,B]$ by the constant 
$
\dys\max\left\{C\,|\phi(s)|;\,\, \frac{\eta}{2}\leq |s|\leq \|u\|_{C^0([A,B])}+\frac{\eta}{2}\right\}
$
which belongs to $L^m(A,B)$.

The dominations of $\phi_n(u_n)$ on $(0,A)\cup(B,L)$ and on $(A,B)$ and the almost everywhere convergence \eqref{3.67} of $\phi_n(u_n)$ together with Lebesgue's dominated convergence Theorem then complete the proof of the claim \eqref{3.65} and of Proposition~\ref{stab}.

\end{proof}
 \vspace{1cm}

\section{Comments and remarks about the present paper and about \cite{GMMP1}}

The aim of the present  paper was to study from a new angle the problem that we treated in \cite{GMMP1}. 

In \cite{GMMP1} we have introduced a notion of weak solution of problem \eqref{pb1} which is recalled in Definition $2$ (Definition \ref{defin2} above) of the present paper; this definition is based on the space $L^2(0,L)$. 

% [andare a 2 di pag. 14]

In the present  paper we have given two new definitions of  weak solution of problem \eqref{pb1}: Definition~$1$ (Definition \ref{defin} above) based on the space $L^1(0,L)$ and Definition $m$ (Definition \ref{definm} above) based on the space $L^m(0,L)$, and we have stated and proved in Section~\ref{sec3} above some properties of these weak solutions: non-existence of weak solutions (Subsection~\ref{sub3.1} above),  existence of weak solutions (Subsection~\ref{sub3.2} above),  and non-isolation of the weak solutions (Subsection~\ref{sub3.3} above). 

Surprisingly, some of these results depend on the definition which is used, or more precisely on the value of the parameter $m$. The most striking illustration of this fact is the result which states that in the model case \eqref{pM} of the singular function $\phi$, namely in the case where $\dys\phi(s)=\phi_\gamma(s)=\frac{1}{|s|^\gamma}$ with $\gamma>0$, when one uses Definition $m$ (Definition \ref{definm} above) with $m>1$, there exists (a large set of) data $g\in L^m(0,L)$ for which problem \eqref{pb1} has a weak solution if and only if $\gamma$ satisfies $\dys 0<\gamma<\frac{1}{m-1}$ (see Theorem~\ref{t3.2}, Theorem~\ref{t3.4}, and the introduction of Section \ref{sec3} above), and also that when one uses Definition $1$ (Definition \ref{defin} above)   with $m=1$, it exists (a large set of)  data $g\in L^1(0,L)$ for which problem \eqref{pb1} has a weak solution for every $\gamma>0$.

\medskip

The new approach to problem \eqref{pb1} developped in the present paper and based on Definition $1$ (Definition \ref{defin} above) and on Definition $m$ (Definition \ref{definm} above) is a continuation and an extension of the approach used in \cite{GMMP1}. 
Actually in the present paper we recover for $m \ge 1$ some (variants of the) results that we obtained in \cite{GMMP1}, but some other ones are missing.

\medskip

Between those missing results is the energy equality of \cite[Proposition~2.10]{GMMP1}, which is recalled in \eqref{vf2bis} above. This energy equality obtained in the case $m=2$ is particularly important since it allows one to obtain in this setting an a priori estimate in $H_0^1(0,L)$ for any  possible weak solution, even if this a priori estimate (which is also available in dimension $N>1$) is very far from being sufficient to obtain the existence of weak solutions. In dimension $N=1$,  still in the case $m=2$, we were able  to obtain in \cite[Lemma 3.1]{GMMP1} a specific new estimate on the approximations  $\phi_n(u_n)$ which allowed us to prove an alternative (see \cite[Theorem~3.4]{GMMP1})\footnote{Note that we were unable to obtain any similar result in the present paper in the framework  of Definition $1$ (Definition \ref{defin} above) and in the framework  of Definition $m$  (Definition \ref{definm} above) when $m\not=2$.}. This alternative states that for any ``reasonable approximation" (see Definition~\ref{3.7} above) of problem \eqref{pb1} one can pass to the limit in a subsequence of solutions of these approximate problems, for which one has the following two possibilities: either the limit $u$ is a weak solution of problem \eqref{pb1} in the sense of Definition $2$ (Definition \ref{defin2} above), or $u\equiv 0$. This result looks like an existence result, but actually is not at all an existence result: in fact in \cite{GMMP1} we were unable to describe a set of data $g$ in $L^{2}(0,L)$\footnote{ Note that we were also unable to define such a set of data in the present paper in the framework of Definition $1$ and  of Definition  $m$.}  such that, for $g$ in this set,   problem \eqref{pb1} has a weak solution in the sense of Definition~\ref{defin2} for the  data $(a,g,\phi)$. The only set $\mathcal{G}$ of data for which we were able to prove an existence result   in \cite{GMMP1} is actually  tautological since it consists in  the image (by the nonlinear singular operator of problem~\eqref{pb1}) of the set $\mathcal{U}$ defined in \cite[formula (7.1)]{GMMP1} of those functions $u$ such  that $u\in H_0^1(0,L)$ for which $\phi(u)\in L^2(0,L)$ (see \cite[Remark~7.1]{GMMP1}). 

%In the present paper, we were not able to obtain in the framework of Definition $1$ (Definition \ref{defin} above) and Definition $m$ (Definition \ref{definm} above)  better results that those that we have obtained in \cite{GMMP1} in the framework of Definition $2$ (Definition \ref{defin2} above): we were only able to prove (see Subsection~\ref{sub3.2} above, and in particular Theorem~\ref{t3.4} and the proof of Proposition~\ref{glue} above) that in the frameworks of Definition $1$ (Definition \ref{defin} above) and Definition $m$ (Definition \ref{definm} above), in the model case where $\dys\phi(s)=\phi_\gamma(s)=\frac{1}{|s|^\gamma}$ with $m$ and $\gamma$ satisfying \eqref{3.1} or \eqref{3.2}, there exists a large set of data $g\in L^m(0,L)$ for which problem \eqref{pb1} has a weak solution in the sense of  Definition $1$ (Definition \ref{defin} above) and Definition $m$ (Definition \ref{definm} above): like in \cite[Section~7]{GMMP1} in the proof of Proposition~\ref{glue} above we have constructed ``by hand" (i.e. without using functional analysis methods)  a large set of elements $\hat{u}$ on  the set $\mathcal{U}_{m}^{\phi_{\gamma}}$, which are indeed weak solutions of problem \eqref{pb1} in the sense of Definition $1$ (Definition \ref{defin} above) and in the sense of Definition $m$ (Definition \ref{definm} above) once one has conveniently defined the corresponding data $g$.

\medskip 

Other missing results, still due to the lack of any a priori estimate, are results of stability and instability of  the  solutions of problems approximating problem \eqref{pb1}  analogous  to the results given  in \cite[Section 8]{GMMP1};    we were unable to obtain such results  in the frameworks  of Definition $1$ (Definition~\ref{defin} above) or    of Definition $m$ (Definition~\ref{definm} above).    
%,  in the frameworks of ,

\medskip 

A third set of results which have been obtained in \cite{GMMP1} in the framework of Definition $2$ (Definition \ref{defin2} above), but which have no analogues in the present paper in the framework of Definition $1$ (Definition \ref{defin} above) and of  Definition $m$ (Definition \ref{definm} above), is the set of results concerning the ordinary differential equation (ODE)  \eqref{ODE},  and more precisely the results that we obtained in \cite{GMMP1}  concerning existence, a priori estimates, stability, positivity, comparison and uniqueness of solutions of the ODE \eqref{ODE}.   

These  results on the ODE \eqref{ODE}   have had important consequences in the study of the possible weak solutions of problem \eqref{pb1} in the sense of Definition $2$. Indeed they allowed us, on the first hand, to prove a surprising multiplicity result (see \cite[Theorem 6.1]{GMMP1}) and, on the second hand, to construct, for any given datum $g \in L^2(0,L)$ and $\delta$ satisfying $0 < \delta < L$, a datum $\hat{g}_\delta \in L^2(0,L)$ which coincides with $g$ on the interval  $[0, L-\delta]$  for which there exists a weak solution of problem \eqref{pb1} in the sense of Definition $2$ (Definition \ref{defin2} above). In the absence of results for  the analogues of ODE \eqref{ODE} with  $m \ge 1$ ($m \ne 2$), we are unable to prove such results in the framework of Definition $1$ and of Definition $m$ with $m \ge 1$ ($m \ne 2$). We nevertheless hope that such results could be proved and we will return on this problem in a future work.

\vspace{1cm}
\section*{Acknowledgements}

 The   authors were partially supported by  the Gruppo Nazionale per l'Analisi Matematica, la Probabilit\`a e le loro Applicazioni (GNAMPA) of the Italian Istituto Nazionale di Alta Matematica (INdAM). The second author was also supported by the FQM-424 Spanish Research Group,  and by CDTIME. The authors would like to warmly thank the Editors of this Special Issue for dedicating it    to  Gioconda and for  asking  them to contribute to it.


\vspace{1cm}


\begin{thebibliography}{99}

% \bibitem{AMM} D. Arcoya and L. Moreno-M\'erida, {\it Multiplicity of solutions for a Dirichlet problem with a strongly singular nonlinearity}, Nonlinear Anal. {\bf 95} (2014), 281--291.
% 	\bibitem{a6}
%D. Arcoya, J. Carmona, T. Leonori, P. J. Mart\'nez-Aparicio, L. Orsina and F. Petitta,
%{\it Existence and nonexistence of solutions for singular quadratic quasilinear equations}, 
%J. Differential Equations  {\bf 246}   (2009), 4006--4042. 
%
%
%\bibitem{b4} D. Arcoya, L. Boccardo, T. Leonori and A. Porretta, 
%{\it Some elliptic problems with singular natural growth lower order terms}, 
%J. Differential Equations {\bf 249} (11) (2010), 2771--2795.  
%     \bibitem{bop} F. Balducci, F. Oliva and F. Petitta,  {\it Finite energy solutions for nonlinear elliptic equations with competing gradient, singular and $L^1$ terms}, J. Differential Equations  {\bf 391} (2024) , 334--369. 
%     
%     \bibitem{bo1} L. Boccardo, {\it Dirichlet problems with singular and gradient quadratic
%lower order terms}, ESAIM: Control, Optimization and the Calculus
%of Variations, {\bf 14} (2008) 411--426. 
%
%
% \bibitem{BC} L. Boccardo and J. Casado-D\'iaz, {\it Some properties of solutions of some semilinear elliptic singular problems and applications to the  $G$-convergence}, Asymptot. Anal. {\bf 86} (1) (2014),  1--15. 
% 
% \bibitem{BGDM2} L. Boccardo, J. I.  Diaz, D. Giachetti and F.  Murat, {\it Existence of a solution for a weaker form of a nonlinear elliptic equation}, In: "Pitman Res. Notes Math. Ser.", 208, Longman Scientific \& Technical, Harlow,  1989, 229--246. 
%
% 
%\bibitem{BGDM} L. Boccardo, D. Giachetti, J. I.  Diaz and F.  Murat,
% {\it Existence and regularity of renormalized solutions for some elliptic problems involving derivatives of nonlinear terms}, J. Differential Equations {\bf 106} (2) (1993), 215--237.
%
%
%
% 	\bibitem{BO} L. Boccardo and L. Orsina, {\it Semilinear elliptic equations with singular nonlinearities},  Calc. Var. Partial Differential Equations {\bf 37} (2010), 363--380.
%	 \bibitem{CST} {A. Canino, B. Sciunzi and A. Trombetta},
%{\it Existence and uniqueness for {$p$}-Laplace equations
%              involving singular nonlinearities.}
%NoDEA Nonlinear Differential Equations Appl. 
% {\bf 23} (2016), 8--18. 
%
% 
%  \bibitem{CMA} J. Carmona and P. J. Mart\'inez-Aparicio, {\it A singular semilinear elliptic equation with a variable exponent},  Adv. Nonlinear Stud. {\bf 16} (2016), 491--498. 
%  
% \bibitem{CD-M} J. Casado-D\'iaz and F. Murat, {\it Semilinear problems with right-hand sides singular at  $u=0$  which change sign}, 
%Ann. Inst. H. Poincar\'e C Anal. Non Lin\'eaire 38 (3) (2021), 877--909. 
% 
% \bibitem{Crandall} M.G. Crandall, P.H. Rabinowitz and L. Tartar, {\it On a dirichlet problem with a singular nonlinearity},  Comm. Partial Diff. Equations, \textbf{2} (1977), 193--222.
% 
% \bibitem{D-G}  P. Donato and D. Giachetti, {\it Existence and homogenization for a singular problem through rough surfaces},
%SIAM J. Math. Anal. {\bf 48} (6) (2016),  4047--4086.
% 
%\bibitem{Fulks-Maybee} W. Fulks and J. S. Maybee,  {\it A singular non-linear equation}, 
%Osaka Math. J. {\bf 12} (1960), 1--19. 
%
%	\bibitem{GMM4} D. Giachetti, P. J. Mart\'inez-Aparicio and F. Murat,  {\it Advances in the study of singular semilinear elliptic problems}, 
%In: "Trends in differential equations and applications", SEMA SIMAI Springer Ser., 8, Springer, [Cham], 2016,  221â241.
% 
%\bibitem{GMM2} D. Giachetti, P. J. Mart\'inez-Aparicio and F. Murat, {\it A semilinear elliptic equation with a mild singularity
%at $u = 0$: Existence and homogenization},   J. Math. Pures Appl. {\bf 107} (2017),  41--77.
% 
%\bibitem{GMM} D. Giachetti, P. J. Mart\'inez-Aparicio and F. Murat, 
%	{\it Definition, existence, stability and uniqueness of the solution to a semilinear elliptic problem with a strong singularity at $ u = 0 $,} Ann. Scuola Normale Pisa (5) {\bf 18} (4) (2018), 1395--1442. 
%	
%	\bibitem{GMM1} D. Giachetti, P. J. Mart\'inez-Aparicio and F. Murat, {\it Homogenization of a Dirichlet semilinear elliptic problem with a strong singularity at  $u=0$  in a domain with many small holes},   
%J. Funct. Anal. {\bf 274} (6) (2018), 1747--1789.
%\bibitem{GMM3}  D. Giachetti, P. J. Mart\'inez-Aparicio and F. Murat, {\it On the definition of the solution to a semilinear elliptic problem with a strong singularity at  $u=0$}, 
%Nonlinear Anal. {\bf 177} (2018), 491--523. 	
%
%%\bibitem{GMMP1}  D. Giachetti, P. J. Mart\'inez-Aparicio, F. Murat and F. Petitta, {\it Remarks and variants on a one-dimensional elliptic equation with a singular first order divergence term}, to appear
%
%\bibitem{GMMP2}  D. Giachetti, P. J. Mart\'inez-Aparicio, F. Murat and F. Petitta, {\it A   one-dimensional elliptic equation with a  first order divergence term singular in $m\neq 0$}, to appear


\bibitem{GMMP1} D. Giachetti, P.J. Mart\'i­nez-Aparicio, F. Murat, and   F. Petitta, {\it Unexpected phenomena in a one-dimensional elliptic equation with a singular first order divergence term}, Ann. Sc. Norm. Sup. Pisa Cl. Sci. (5), in press. 


% \bibitem{Lair-1} A. V. Lair and A. W. Aihua,  {\it Entire solution of a singular semilinear elliptic problem},  J. Math. Anal. Appl. {\bf 200} (1996), 498--505. 
% 
% \bibitem{Lair-2} A. V. Lair and A. W. Aihua, {\it Classical and weak solutions of a singular semilinear elliptic problem}, J. Math. Anal. Appl. {\bf 211} (1997), 371--385. 
%
%
%     \bibitem{LM} A. C. Lazer and P. J. McKenna,  {\it On a singular nonlinear elliptic boundary-value problem}, Proc. Amer. Math. Soc. {\bf 111} (1991),  721--730. 
%     
%     \bibitem{pe} P. J. Mart\'inez-Aparicio, {\it Singular Dirichlet problems with quadratic gradient}. 
%Boll. Unione Mat. Ital. (9) {\bf 2} (3) (2009), 559--574.
%     
%  \bibitem{OP1} F. Oliva and F. Petitta,  {\it On singular elliptic equations with measure sources}, ESAIM Control Optim. Calc. Var. \textbf{22}  (1)  (2016),  289--308.  
%\bibitem{OP} F. Oliva and F. Petitta, {\it Finite and infinite energy solutions of singular elliptic problems: Existence and uniqueness},   J. Differential Equations \textbf{264} (1) (2018),   311--340.  
%
%\bibitem{op} F. Oliva, F. Petitta, {\it Singular Elliptic PDEs: an extensive overview},  Partial Differential Equations and Applications   \textbf{6} (6) (2025). 
%
%\bibitem{Stuart}  C. A. Stuart, {\it Existence and approximation of solutions of non-linear elliptic equations},
%Math. Z. {\bf 147}  (1) (1976),   53--63. 
%
%\bibitem{Z} Z. Zhang and J. Cheng,  {\it Existence and optimal estimates of solutions for singular nonlinear Dirichlet problems}, Nonlinear Anal. {\bf 57} (2004), 473--484. 
%
\end{thebibliography}
\end{document}